\newcommand{\Sing}{\operatorname{Sing}}
 \newcommand{\genus}{\operatorname{genus}}
  \newcommand{\length}{\operatorname{length}}
  \newcommand{\Ss}{\mathbf{S}}
 \newcommand{\RR}{\mathbf{R}}  % reals
 \newcommand{\Reg}{\operatorname{Reg}}
 \newcommand{\Tfat}{T_\textnormal{fat}}
 \newcommand{\Tpos}{T_\textnormal{pos}}
    \newcommand{\dist}{\operatorname{dist}}
\newcommand{\Min}{M_\textnormal{in}}
\newcommand{\Mout}
{M_\textnormal{out}}
  \newcommand{\sing}{\operatorname{sing}}
 \newcommand{\eps}{\epsilon}
\newcommand{\Hh}{\mathcal{H}}
\def\begfig {
\begin{figure}
\small }
\def\endfig {
\normalsize
\end{figure}
}
    \newtheorem{theorem}    {Theorem}   %    [section]
    \newtheorem{lemma}      [theorem]       {Lemma}
    \newtheorem{corollary}  [theorem]     {Corollary}
    \newtheorem{claim}{Claim}
    \newtheorem*{theorem*}{Theorem}
    \theoremstyle{definition}
    \newtheorem{definition}  [theorem] {Definition}
    \theoremstyle{definition}
    \newtheorem{remark}   [theorem]       {Remark}
\title{The 
Genus-Decreasing Property of Mean Curvature Flow, I}
\author{Brian White}
\address{\newline Department of Mathematics \newline
 Stanford University \newline 
  Stanford, CA 94305, USA\newline
{\sl E-mail address:} {\bf bcwhite@stanford.edu}
}
\date{January 20, 2026}
\begin{document}

\begin{abstract}
This paper proves that,
in mean curvature flow of a compact surface in 
a complete $3$-manifold
  with Ricci curvature bounded
  below, the genus of the regular set
is a decreasing function
of time
as long as the only singularities are given by shrinking sphere and shrinking cylinder tangent flows. 
The paper also proves some local versions of that fact.

\end{abstract}

\maketitle

\section{Introduction}

This paper proves
theorems that imply the following:

\begin{theorem}
\label{intro-theorem}
Suppose $M$ is a compact, smoothly embedded surface in a complete, smooth Riemannian $3$-manifold
with Ricci curvature bounded below.  Let $M(t)$
be the result of flowing
$M$ for time $t$ by level set flow.  Let $\Tpos=\Tpos(M)$ be the largest time $\le \infty$
such that
\begin{equation*}
  t\in[0,\Tpos)\mapsto 
    \mu(t):=\Hh^2\llcorner M(t)
\tag{*}
\end{equation*}
is a standard Brakke flow (see Definition~\ref{standard-definition}), 
and such that all shrinkers to the flow at  times $<\Tpos$
 have multiplicity~$1$ 
 and are planes, spheres, or cylinders.
Then
\begin{equation}
\label{intro-g}
  g(t):= \genus(\Reg M(t))
\tag{*}
\end{equation}
is a decreasing
function of $t\in [0,\Tpos)$,
and of $t\in [0,\Tpos]$ if $\Tpos<\infty$.
\end{theorem}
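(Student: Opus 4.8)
The plan is to show that $g(t)=\genus(\Reg M(t))$ is constant on each connected component of the open set of times at which the flow is smooth and closed (a smooth closed surface evolving smoothly does not change diffeomorphism type), and that at every singular time $g$ can only jump downward; since $g$ is integer-valued this will give that $g$ is nonincreasing on $[0,\Tpos)$, and on $[0,\Tpos]$ when $\Tpos<\infty$. Concretely, for each singular time $t_0\in(0,\Tpos)$ I will prove the two inequalities $g(t_0)\le g(t_0^-)$ and $g(t_0^+)\le g(t_0)$, where $g(t_0^\pm)$ denote the (constant) values of $g$ on small one-sided punctured neighbourhoods of $t_0$; when $\Tpos<\infty$ the value $g(\Tpos)$ is handled similarly, using only the left-sided analysis.

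The inequality $g(t_0)\le g(t_0^-)$ is the easy one. By the local regularity theorem --- available here because the lower Ricci bound supplies the local area bounds and the (localized) monotonicity formula that the theory requires, and because all shrinkers are planes, spheres, or cylinders --- the flow is smooth in a spacetime neighbourhood of each point of $\Reg M(t_0)$, so $M(t)\to M(t_0)$ smoothly on compact subsets of $\Reg M(t_0)$ as $t\uparrow t_0$. Hence any compact subsurface with boundary $K\subset\Reg M(t_0)$ is diffeomorphic to a subsurface of $M(t)$ for $t<t_0$ near $t_0$, so $\genus K\le g(t_0^-)$; taking the supremum over $K$ gives $g(t_0)=\genus(\Reg M(t_0))\le g(t_0^-)$. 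The same reasoning at $\Tpos$, using smoothness of the flow along $\Reg M(\Tpos)$ at nearby earlier times, gives the endpoint inequality.

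For $g(t_0^+)\le g(t_0)$ I will analyse the flow near $\Sing M(t_0)$. Cover this compact set by finitely many small balls with union $\Omega$, arranged together with a $\delta>0$ so that, for all $t\in(t_0-\delta,t_0+\delta)$: $\Sing M(t)\subset\Omega$; in each ball the parabolically rescaled flow over this window is $\eps$-close to a multiplicity-one shrinking sphere or cylinder; and $M(t)$ meets a thin collar just outside $\Omega$ transversally, so that $M(t_0-\delta)\setminus\Omega$ and $M(t_0+\delta)\setminus\Omega$ are ambiently isotopic surfaces with boundary, each converging smoothly as $\delta\downarrow0$ to the compact surface with boundary $\Reg M(t_0)\setminus\Omega$, a subsurface of $\Reg M(t_0)$. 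In a ``sphere'' ball, $M(t_0+\delta)$ has no part (the sphere has vanished); in a ``cylinder'' ball, $M(t_0-\delta)$ meets the ball in a single annular neck while $M(t_0+\delta)$ meets it in a pair of disks --- the flow performs a standard surgery along the core circle of the neck. Hence $M(t_0+\delta)$ is obtained from $M(t_0-\delta)$ by discarding finitely many $2$-sphere components and replacing finitely many disjoint annular necks by pairs of disk caps, and neither operation can raise the genus (surgery along a simple closed curve lowers the genus by one if the curve is nonseparating and leaves it unchanged otherwise). In particular $M(t_0+\delta)$ is diffeomorphic to a compact subsurface of $\Reg M(t_0)$ with some disks attached, so $g(t_0^+)=\genus(M(t_0+\delta))\le\genus(\Reg M(t_0))=g(t_0)$.

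The main obstacle --- and, I expect, essentially the substance of the ``local versions'' advertised in the introduction --- is the ``cylinder'' ball analysis together with the collar statement: one must prove rigorously that the flow near a cylindrical singularity performs a genuine neck-pinch surgery (a single tube just before, a pair of disks just after) and control the flow in the collar region so that the topology outside $\Omega$ really does match across $t_0$. This is delicate precisely because a cylindrical singular set need not be a single point nor isolated in time; ruling out the attendant pathologies will require, besides the classification of shrinkers in the hypothesis, the uniqueness of cylindrical tangent flows (via {\L}ojasiewicz--Simon-type estimates) and, once again, the lower Ricci bound for the ambient compactness underlying the blow-up analysis. The remaining points --- producing $\Omega$ and $\delta$, the upper semicontinuity of the singular set in time, and reducing to the preceding situation despite a possible accumulation of singular times (harmless since $g$ is a bounded integer) --- should be routine given those inputs.
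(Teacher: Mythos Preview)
Your approach differs substantially from the paper's, and the part you flag as the ``main obstacle'' is precisely what the paper circumvents rather than proves.

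The paper does not attempt a surgery picture. Instead it uses a general reduction (Lemma~\ref{lsc-lemma}, Lemma~\ref{general-lemma}, Theorem~\ref{general-theorem}): since $g$ is lower semicontinuous and almost every time is regular, it suffices to show that if regular times $t_k\searrow T$ satisfy $g(t_k)\ge n$, then $g(T)\ge n$. For this the paper chooses, in each $M(t_k)$, $1$-cycles $\alpha^k_i,\beta^k_i$ ($1\le i\le n$) with mod~$2$ intersection numbers $\delta_{ij}$, taken to be \emph{length-minimizing} in their mod~$2$ homology classes. The key result (Theorem~\ref{geodesics-theorem}) shows that such minimizing cycles are uniformly bounded away from $\Sing M(T)$: one blows up at a hypothetical point of approach, lands on a compact convex surface, a bowl soliton, or a cylinder with axis parallel to $L$ (the last by Colding--Minicozzi), and checks that none of these supports a mod~$2$ minimizing geodesic compatible with the local topological constraint of Corollary~\ref{morse-corollary}. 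The cycles then pass smoothly to $\Reg M(T)$ with intersection data intact, giving $g(T)\ge n$.

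Your route could in principle be completed---the paper explicitly mentions surgery methods as an alternative---but what you have written is not yet a proof. The assertion that in a cylinder ball one sees ``a single annular neck before, a pair of disks after'' is stronger than anything the paper establishes (Corollary~\ref{morse-corollary} says only that $M(t)\cap B(0,r)$ is a disjoint union of disks and spheres for regular $t$ just after, with no count and no statement about $t$ just before), and justifying the precise surgery description when the cylindrical singular set may be a curve is substantial work you have deferred. Your treatment of accumulating singular times (``harmless since $g$ is a bounded integer'') is likewise only a sketch; the paper's Lemma~\ref{general-lemma} is a clean device that handles this automatically. The geodesic method buys exactly the ability to skip all of this: one never needs to know how the topology reorganizes through the singular region, only that length-minimizers cannot enter it.
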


(By Theorem~\ref{main-theorem}, $g(t)$ is 
decreasing for $t\in [0,\Tpos)$.
By lower-semi\-continuity (Lemma~\ref{lsc-lemma}), 
$g(t)$ is also
decreasing for $t\in [0,\Tpos]$.)

The ``pos'' in $\Tpos$  is short for ``positive genus'':
one can show that if $\Tpos<\infty$, then there is a shrinker at time $\Tpos$ that is a smooth, multiplicity $1$ surface of genus $\ge 1$.
See Theorem~\ref{Tpos-shrinker-theorem} in the appendix.  (In some other works,
$\Tpos$ is denoted by $T_\textnormal{gen}$.)

We also prove localized versions of
Theorem~\ref{intro-theorem}.
See Theorems~\ref{local-open} and~\ref{local-curves}, and Remark~\ref{local-open-remark}.
Theorem~\ref{local-curves}
plays an essential role
in~\cite{hmw-shrinkers}.

In a subsequent
paper~\cite{genus-II}, we will show that $g(t)$
is decreasing for
$t\in [0,\Tfat]$, where
$\Tfat=\Tfat(M)$ is the fattening time, i.e., the infimum of
times $t$ at which $M(t)$ has nonempty interior.

Now suppose that $M$ is two-sided, and let $\Min(\cdot)$ and $\Mout(\cdot)$ be the innermost and outermost mean curvature
flows starting with $M$.
Thus $\Min(t)=\Mout(t)=M(t)$
for all $t\le \Tfat$,
but $\Min(t)\ne\Mout(t)$
for all $t>\Tfat$ sufficiently close to~$t$.  
In~\cite{genus-II}, we will give
an example to show that 
\[
   \genus(\Reg \Min(t))
\]
need {\bf not} be a decreasing function of~$t$. On the 
other hand, its
restriction to the set
of regular times is
decreasing.
(The same statements are 
true for $\Mout(\cdot)$.)

The first paper to prove a genus-decreasing property of mean curvature flow
was~\cite{white-95}.
That paper showed that
if $M$ is a smooth, compact surface in $\RR^3$, and if $t_1<t_2$
are times at which
 $M(t_i)$ is a smooth embedded surface, then
\[
  \genus(M(t_1))
  \ge
  \genus(M(t_2)).
\]
More generally,
if one assumes completeness and a lower bound on Ricci curvature, then that proof works in ambient
manifolds diffeomorphic to $\RR^3$ or to $\Ss^3$.
(With a little modification, the proof also works for manifolds diffeomorphic to any open subset of $\Ss^3$).
The proof is rather easy: it uses very little other than the avoidance principle.
But the proof breaks down
for general ambient $3$-manifolds.
In~\cite{ccms-generic}*{Appendix~G}, Chodosh, Choi, Mantoulidis,
and Schulze gave localized versions of some of the 
theorems in~\cite{white-95}, but not of
the theorem about genus reduction.

See~\cite{white-topological} for some 
higher dimensional results about change of topology
under mean curvature flow.

In~\cite{bk}, Bamler and Kleiner sketched
a proof that
the function $g(\cdot)$
in Theorem~\ref{intro-theorem} is 
decreasing when restricted to the set of regular times.
See~\cite{bk}*{Lemma~7.8}. (This monotonicity plays an important role in further arguments in~\cite{bk}.) 
It may be helpful to some readers to have a more detailed proof, such as the one   provided here. This proof appears to differ from the one outlined in~\cite{bk}, and it also yields the stronger conclusion that $g(\cdot)$ is decreasing for all times $\le \Tpos$, not just for regular times.

In~\cite{bk}, Bamler and Kleiner deduced (from the $t<\Tpos$ case)
that the restriction of $g(\cdot)$
to the set of regular times is decreasing up 
to $\Tfat$.
More generally, they deduce that if $t_1<t_2$ are regular times for $\Min(\cdot)$, then
$\genus(\Min(t_1))
\ge
\genus(\Mout(t_2))$.
(The same is true for 
  $\Mout(\cdot)$.)

The key tool
in  this paper is the following
fact (Theorem~\ref{geodesics-theorem}):
if $T<\Tpos$ is a singular time, if $t_k>T$
are regular times converging to $T$, and
if $\Gamma_k$ is a $1$-cycle that 
is length-minimizing mod $2$ in $M(t_k)$, 
then the $\Gamma_k$ are 
bounded away from $\Sing M(T)$ as $k\to\infty$.
To see how this fact is used to control genus, suppose that each $M(t_k)$ has genus $\ge 1$.  Then $M(t_k)$ contains $1$-cycles $\alpha^k$ and $\beta^k$
whose mod~$2$ intersection number is~$1$. 
We choose the cycles to minimize length in their mod~$2$ homology classes. 
Because the $\alpha^k$ and $\beta^k$ are bounded away from $\Sing M(T)$, they converge smoothly (after passing to a subsequence)
to $1$-cycles $\alpha$ and $\beta$ in $\Reg M(T)$.  Furthermore, the intersection number of $\alpha$ and $\beta$ is $1$, so $\Reg M(T)$ must also have genus $\ge 1$.

The results in this paper could be proved by other methods, such as mean curvature flow with surgery.  The proofs here are perhaps more intuitive
because they only use qualitative statements about mean curvature flow. (Of course, the proofs of those qualitative statements involve hard analysis.)

The paper is organized as follows. 
Section~\ref{prelim-section} summarizes the main facts about mean curvature flow that we need.
 Section~\ref{general-section} proves some very general facts about flows (not necessarily mean curvature flows) and genus.
Section~\ref{spherical-section} proves a simple property of shrinking sphere singularities.
Section~\ref{cylindrical-section} proves some properties of shrinking cylinder singularities.
Section~\ref{geodesics-section} proves the key fact mentioned above about length-minimizing $1$-cycles.
Sections~\ref{genus-section}, 
 \ref{local-section},
 and
 \ref{another-local-section} prove the theorems about decreasing genus.

The author would like to thank
David Hoffman and Francisco Mart\'in for helpful suggestions.

\section{Terminology
and Preliminaries}
\label{prelim-section}

The results in the introduction were stated in terms of level set flow.  In  rest of the paper, most of the theorems are are stated in slightly more generality for integral Brakke flows. Strictly speaking, a Brakke flow is a one-parameter family of Radon measures.  However, for the flows considered in this paper,
all tangent flows have multiplicity~one.
For such flows, the Radon
measures are determined by 
the spacetime support of the flow.
Thus we can, by slight abuse of terminology, act as though the flow were a one-parameter family of sets.

We prove theorems about
mean curvature flows $t\in I\mapsto M(t)$ in a smooth
Riemmannian manifold $N$.
It is very convenient to translate and dilate the surface $M(t)$ as if $N$ were Euclidean space.  In fact, we can do this by assuming that $N$ is isometrically embedded in a Euclidean space.

\begin{definition}
\label{standard-definition}
A {\bf standard Brakke} flow
is an integral Brakke flow that is unit regular and cyclic.
\end{definition}

Unit regularity means that each spacetime point of Gauss density~$1$ has a spacetime neighborhood in which the flow is a classical smooth mean curvature flow with multiplicity~$1$.
See~\cite{white-cyclic}*{Definition~4.1} for the definition of cyclic.  Roughly speaking, the a flow is cyclic if no blowup consists of an odd number (counting multiplicity) of halfplanes that meet along a their common edge.  Such flows occur very naturally.  For example, any flow constructed by elliptic regularization (using integral currents or flat chains mod~$2$) is standard,
and any limit of standard flows is standard. See~\cite{white-mcf-boundary} for basic properties of standard Brakke flows.  

The following two theorems summarize most
of the known facts about mean curvature flow
that will be needed in this paper.

\begin{theorem}
\label{summary-theorem}
Suppose $t\in (a,b)\mapsto M(t)$ is a $2$-dimensional,
standard
Brakke flow in a smooth Riemannian $3$-manifold $N$.   
Suppose $T\in (a,b)$,
$p\in M(T)$, and the tangent
flow to $M(\cdot)$ at $(p,T)$ has multiplicity~one and is a shrinking sphere or shrinking cylinder.
Suppose that $(p_i,t_i)$
is a sequence of regular spacetime points of the flow that converge to $(p,t)$.
Then the norm $h_i$ of the mean curvature vector of $M(t_i)$ at $p_i$ tends to $\infty$, and 
\[
  M_i':= h_i (M(t_i)-p_i)
\]
converges smoothly (after
passing to a subsequence) to a convex limit surface $M'$.

If the tangent flow at $(p,T)$ is a shrinking sphere, then $M'$ is a sphere.

If the tangent flow at $(p,T)$ is a shrinking cylinder with axis $L$, then $M'$ is one 
of the following:
\begin{enumerate}
\item A compact, uniformly convex surface.
\item A bowl soliton.
\item a cylinder.
\end{enumerate}
Furthermore, if $M'$ is a cylinder, then its axis is
parallel to~$L$.
\end{theorem}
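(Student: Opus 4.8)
The plan is to analyze the rescaled flows $M_i'$ using the standard compactness and regularity theory for mean curvature flow, dividing into the sphere and cylinder cases according to the tangent flow at $(p,T)$.

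First I would set up the blow-up. Since the tangent flow at $(p,T)$ has multiplicity one and is a shrinking sphere or cylinder, the point $(p,T)$ has a parabolic neighborhood in which the flow, after parabolic rescaling by $\lambda^{-1}$ and recentering at $(p,T)$, converges in Brakke-flow sense (and smoothly, away from the shrinker's singular set, which is at most a single line for the cylinder and a single point for the sphere) to the self-shrinking flow. This forces the second fundamental form of $M(t)$ to blow up at points approaching $(p,T)$; in particular the regular points $(p_i,t_i)\to(p,T)$ have $h_i\to\infty$, since otherwise a standard pseudolocality / interior estimate argument (or unit regularity: a bound on $|A|$ near a point forces Gauss density close to $1$ nearby, contradicting that the tangent flow is a nontrivial shrinker) would give a smooth limit with bounded curvature, incompatible with the singularity. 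Then I would consider $M_i' = h_i(M(t_i)-p_i)$, which by construction has $|A|=1$ (or rather $|H|=1$) at the origin. By Brakke-type local regularity and curvature estimates for mean curvature flow — specifically, the surfaces $M(t_i)$ near $(p_i,t_i)$ embed into ancient flows under blow-up, and the type-I rescaling around a cylinder or sphere singularity gives mean-convex, in fact convex, limits (White's structure theorem for blow-ups near cylindrical singularities, or Huisken–Sinestrari / Haslhofer–Kleiner convexity estimates, or the convexity estimates from the ``mean-convex neighborhood'' results of Choi–Haslhofer–Hershkovits) — one extracts, after passing to a subsequence, a smooth limit surface $M'$ which is complete, properly embedded, convex, and passes through the origin with $|H|=1$ there, hence nonflat. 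The key inputs here are the one-sided minimization / convexity near such singularities and interior curvature estimates giving subsequential smooth convergence; these are exactly the ``qualitative statements about mean curvature flow'' the introduction alludes to.

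Next, the sphere case. If the tangent flow at $(p,T)$ is a shrinking sphere, then near $(p,T)$ the flow is, on the relevant scale, uniformly close to a round shrinking sphere; rescaling by $h_i$ (which is comparable to the reciprocal of the radius) and recentering at a point $p_i$ on the surface produces, in the limit, a complete embedded convex surface that on every compact set is a $C^\infty$-limit of (nearly) round spheres of radius $\to\infty$ as one zooms, but with the curvature normalized to be $1$ at the origin. The only complete embedded convex surface arising this way is a round sphere: the limit is convex, and the convergence of round spheres of bounded radius (after normalization the radius is exactly $1/h$ times the original, so it is controlled) forces $M'$ to be a sphere. I would make this precise by noting that the Gauss density ratios at $(p_i,t_i)$ on small scales converge to that of the sphere, so the blow-up limit is a self-similar shrinking sphere's time slice, i.e., a round sphere.

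Finally, the cylinder case, which I expect to be the main obstacle. Here the blow-up limit $M'$ is a complete, properly embedded, convex (by the mean-convex neighborhood theorem near cylindrical singularities) surface in $\RR^3$ that arises as a time-slice of an ancient, convex, mean curvature flow — indeed the blow-up flow is an ancient solution lying inside the mean-convex region, so it is an ancient convex flow. By Haslhofer's classification (or the White / Huisken–Sinestrari structure results) of ancient convex mean curvature flows that are not cylinders or planes, or more directly by the classification of noncollapsed ancient solutions, the time slices are: a compact uniformly convex ancient oval (shrinking to a point, e.g.\ an Angenent oval times nothing — here the ancient oval surface), a bowl soliton, or a round cylinder. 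The plane is excluded because $|H|=1\ne0$ at the origin. To get the final sentence, I would argue that if $M'$ is a cylinder then its axis must be parallel to $L$: the blow-up flow at $(p,T)$ on the relevant intermediate scales is forced to be close to the shrinking cylinder with axis $L$, and iterating the blow-up (or using the uniqueness of cylindrical tangent flows and the fact that the necks detected by the $M(t_i)$ near $p_i$ are asymptotic to necks in the fixed cylindrical direction) shows that any cylindrical limit inherits the axis direction $L$; more concretely, along the neck the surface $M(t_i)$ is graphical over the cylinder of axis $L$ with small $C^2$ norm, and rescaling a graph over a fixed cylinder by a factor gives a graph over the same-axis cylinder, so the limit axis is parallel to $L$. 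The delicate point is justifying the convexity of the blow-up limit and ruling out other ancient convex solutions — this rests on the mean-convex-neighborhood theorem for cylindrical singularities (Choi–Haslhofer–Hershkovits, building on White, Huisken–Sinestrari, Haslhofer–Kleiner) together with the classification of ancient noncollapsed flows in $\RR^3$, and on the multiplicity-one, unit-regular hypotheses that let us extract smooth limits rather than merely varifold limits.
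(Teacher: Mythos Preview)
The paper's proof of this theorem is purely by citation: the sphere case is declared trivial (and not used elsewhere), the cylinder case up through the trichotomy is \cite{chhw}*{Corollary~1.18}, and the final assertion about the axis is \cite{cm-new}. Your sketch is in the same spirit for the bulk of the statement---you invoke the mean-convex-neighborhood theorem and the classification of ancient noncollapsed convex flows, which is exactly the content of \cite{chhw}---so on that part you and the paper agree, except that you try to outline the mechanism rather than simply cite it.

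There is, however, a genuine gap in your argument for the last sentence. Your claim is that near $p_i$ the surface $M(t_i)$ is graphical over the cylinder with axis $L$, and that rescaling a graph over a cylinder preserves the axis. The problem is a scale mismatch: the tangent-flow convergence tells you $M(t_i)$ is close to the axis-$L$ cylinder only at the parabolic scale $|(p_i,t_i)-(p,T)|$, whereas the rescaling factor $h_i$ can be much larger, so $h_i^{-1}$ can be a much smaller scale. A priori, as one zooms in from the tangent-flow scale to the scale $h_i^{-1}$, the neck axis could rotate, and the limit cylinder could have a different axis. Ruling this out is precisely the content of the Colding--Minicozzi quantitative uniqueness result \cite{cm-new} that the paper cites; it is not a consequence of graphicality alone, nor of the mean-convex-neighborhood theorem or the ancient-flow classification you invoke.
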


Throughout this paper, ``cylinder'' means ``circular cylinder''.

Theorem~\ref{summary-theorem} refers to ``the'' tangent flow, which
is permissible because if one tangent flow is given by a shrinking sphere or shrinking
cylinder (of multiplicity one), then it is the unique tangent flow at that spacetime point.  
See~\cite{cm-unique}. 

\begin{proof}
In the case of a shrinking sphere tangent flow, the assertion is trivially true.
(We will not need that case of the theorem.)

In the case of a shrinking cylinder tangent flow, 
all the assertions, except for last one,  follow from
\cite{chhw}*{Corollary~1.18}.
That corollary
is stated for mean curvature flow in Euclidean space, 
but the proof 
in~\cite{chhw} works in any smooth Riemannian
manifold~\cite{chhw}*{Footnote~(8)}.
The last assertion
 of Theorem~\ref{summary-theorem} follows
from~\cite{cm-new}.
\end{proof}

\begin{theorem}
\label{ricci-theorem}
Suppose $t\in [0,T]\mapsto M(t)$
is a $2$-dimensional standard Brakke flow in a smooth Riemannian $3$-manifold~$N$.
If the support of $M(0)$ is compact, then the union of the supports of the $M(t)$, $t\in [0,T]$, is contained in a compact set.
\end{theorem}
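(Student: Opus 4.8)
The plan is to trap the flow inside a family of bounded regions and apply the avoidance principle. Recall that any integral Brakke flow --- in particular any standard Brakke flow --- obeys the avoidance principle against smooth mean curvature flows, and more generally its spacetime support is contained in the weak (level-set) flow of $\spt M(0)$, which is confined by any ``outer barrier'' containing it at time $0$. Since $N$ is complete, every bounded subset of $N$ has compact closure, so it suffices to produce such a barrier: a family $\{U_t\}_{t\in[0,T]}$ of bounded open sets with $\spt M(0)\subset U_0$, with $\bigcup_t U_t$ bounded, and not contracting faster than mean curvature flow would contract it. One then concludes $\spt M(t)\subset U_t\subset\overline{\bigcup_s U_s}$ for all $t\in[0,T]$, and the right-hand side is compact.

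The lower Ricci bound enters in the construction of such a barrier, via the Laplacian comparison theorem. Fix $q\in N$ with $\spt M(0)\subset B_{r_0}(q)$; this is possible since $\spt M(0)$ is compact. The Laplacian comparison theorem gives $\Delta\dist(\cdot,q)\le C_0$, in the distributional (hence barrier) sense, at radii $\dist(\cdot,q)\ge r_0$, where $C_0=C_0(N,r_0)$ is independent of the radius (for instance $C_0=2/r_0$ when the Ricci curvature is nonnegative). This bounds from above the mean curvature of the metric spheres about $q$, which is the quantity governing how fast the metric balls $\overline{B_{\rho(t)}(q)}$ must be allowed to grow in order to remain an outer barrier. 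Letting $\rho$ grow, from a value $\rho(0)$ large enough to contain $\spt M(0)$, at the bounded rate dictated by $C_0$, produces an outer barrier on $[0,T]$ contained in $\overline{B_R(q)}$ for some $R=R(N,r_0,T)$; the avoidance principle then gives $\spt M(t)\subset\overline{B_R(q)}$ for every $t\in[0,T]$, which is the assertion.

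The point requiring care --- and the main obstacle --- is that in a general complete manifold carrying only a lower Ricci bound, the metric spheres $\partial B_\rho(q)$ are not smooth (the cut locus of $q$) and need not be mean convex for large $\rho$; so ``the rate dictated by $C_0$'' above must be understood through the weak flow and through the distributional form of the comparison estimates, in which the singular part of $\Delta\dist(\cdot,q)$ along the cut locus contributes with a favorable sign. I would therefore carry the argument out entirely within the level-set framework on $N$, where these technicalities are routine. Alternatively, one may simply invoke the fact --- standard in the theory of weak flows on manifolds --- that an integral Brakke flow with compact initial data in a complete Riemannian manifold remains in a compact set over any finite time interval. A third, self-contained route uses Huisken's monotonicity formula on $N$: since the Gaussian density of the flow at each point of its support is at least one, comparing the monotonicity quantity at scale $\sqrt{t}$ with the compactly supported initial measure forces $\spt M(t)$ to stay within a bounded distance of $\spt M(0)$ for all $t\in[0,T]$.
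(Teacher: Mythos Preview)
The paper gives no argument of its own here: it simply records that the statement is a special case of Theorems~6 and~34 of Hershkovits--White, \emph{Avoidance for set-theoretic solutions of mean-curvature-type flows}. Your primary route --- manufacture outer barriers from geodesic balls whose expansion rate is governed by the Laplacian-comparison bound on the mean curvature of metric spheres, then apply avoidance in the level-set framework --- is exactly the kind of argument that reference carries out, and your observation that the cut locus forces one into the weak/viscosity setting (where the singular part of $\Delta\dist(\cdot,q)$ has the favorable sign) is the right technical point. So your sketch is not so much a different route as a fleshing-out of what the paper has outsourced.

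Two remarks. First, you assume that $N$ is complete with Ricci curvature bounded below. Neither hypothesis appears in the theorem statement as printed, yet both are genuinely needed: a warped product $dr^2 + f(r)^2\, g_{\SS^2}$ with $f$ decaying fast enough (say $f(r)=e^{-e^r}$ for large $r$) is complete but has Ricci unbounded below, and its round spheres escape to infinity in finite time under mean curvature flow. Both hypotheses are present everywhere the paper actually invokes this theorem, and the theorem's label itself suggests the Ricci hypothesis was intended; so you have correctly supplied the operative assumptions. Second, your third alternative via Huisken's monotonicity formula is shakier than the other two: on a general Riemannian manifold the monotonicity formula acquires curvature-dependent error terms, and converting the unit-density lower bound into a spacetime-support bound without first confining the flow to a region of controlled geometry is essentially circular. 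I would drop that route and keep the barrier argument as the main line.
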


This fact is extremely general: it actually holds for weak set flows
in Riemannian $m$-manifolds.
See Theorems~6 and~34
in~\cite{hw-avoid}.

 \section{Generalities}
\label{general-section} 
If $t\in I\mapsto M(t)$ is any one-parameter family of closed sets of $N$, we let
\[
  \Reg M(t)
\]
be the set of points $p\in M(t)$ with the following property.
There exists an open neighborhood $U$ of $p$ and an a relatively open interval $J\subset I$
such that 
\[
  \tau\in J \mapsto M(\tau)\cap U
\]
is a smooth, one-parameter family of smooth, properly embedded $2$-dimensional, submanifolds of $U$.
We let 
\[
  \Sing M(t):= M(t)\setminus \Reg M(t).
\]

(Warning: whether $p\in \Reg M(t)$ depends not just on $M(t)$, 
but on~$M(\tau)$ for~$\tau$ in some open interval
 containing~$t$.
However, for the flows of interest in this paper, $p\in \Reg M(t)$ if and only there is a neighborhood $U$ of $p$ such that
 $M(t)\cap U$ is a smooth, properly 
 embedded $2$-manifold in $U$.
For instance, this holds for all unit-regular Brakke flows
for which no planes of multiplicity~$>1$ occur as tangent flows.
In particular, it holds
for all flows that are 
almost regular in the terminology of~\cite{bk}.)

We define $\genus(M(t))$ to be $\genus(\Reg M(t))$.

A {\bf regular time} is a time $t\in I$ for which
 $\Sing M(t)$ is empty.

\begin{lemma}\label{lsc-lemma}
$g(t):=\genus(\Reg M(t))$ is a lower-semicontinuous function of $t$.
\end{lemma}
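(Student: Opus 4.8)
The plan is to verify directly that $g$ is lower-semicontinuous at an arbitrary $t\in I$, i.e.\ that $\liminf_{s\to t}g(s)\ge g(t)$. It suffices to prove the following: for every integer $g$ with $g\le g(t)$ (and, if $g(t)=\infty$, for every integer $g$), there is a relatively open interval $J\subseteq I$ with $t\in J$ such that $g(\tau)\ge g$ for all $\tau\in J$. Granting this, $\liminf_{s\to t}g(s)\ge g$ for every such $g$, and hence $\liminf_{s\to t}g(s)\ge g(t)$.

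So fix $g$ as above. First I would choose a compact subsurface $\Sigma$ of $\Reg M(t)$ with $\genus(\Sigma)\ge g$; such a $\Sigma$ exists because the genus of a (possibly noncompact, possibly disconnected) surface is the supremum of the genera of its compact subsurfaces. Since $\Sigma$ is compact and contained in $\Reg M(t)$, I can cover it by finitely many open sets $U_1,\dots,U_n$ for which there exist relatively open intervals $J_1,\dots,J_n\subseteq I$, each containing $t$, such that $\tau\in J_i\mapsto M(\tau)\cap U_i$ is a smooth one-parameter family of smooth, properly embedded $2$-dimensional submanifolds of $U_i$; this is exactly the defining property of $\Reg M(t)$. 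Set $U:=U_1\cup\dots\cup U_n$ and $J_0:=J_1\cap\dots\cap J_n$. Then $J_0$ is a relatively open interval containing $t$, $U$ is a neighborhood of $\Sigma$, and near $\Sigma$ the assignment $\tau\in J_0\mapsto M(\tau)$ is a smooth one-parameter family of smooth embedded surfaces (the local pieces $M(\tau)\cap U_i$ agree wherever they overlap). Moreover $M(\tau)\cap U\subseteq\Reg M(\tau)$ for every $\tau\in J_0$: given $q\in M(\tau)\cap U_i$, the neighborhood $U_i$ together with a subinterval of $J_i$ that is a neighborhood of $\tau$ in $I$ witnesses $q\in\Reg M(\tau)$.

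Next I would apply the time-dependent isotopy extension theorem. Because $\Sigma$ is compact and $\tau\in J_0\mapsto M(\tau)$ varies smoothly near $\Sigma$, there are a relatively open interval $J\subseteq J_0$ containing $t$ and, for each $\tau\in J$, a smooth embedding $\psi_\tau\colon\Sigma\to M(\tau)\cap U$ with $\psi_t$ equal to the inclusion. (Concretely: realize $\bigcup_{\tau\in J_0}\bigl(M(\tau)\cap U\bigr)\times\{\tau\}$ as a smooth submanifold of $U\times J_0$ that submerses onto $J_0$, and apply Ehresmann's fibration theorem over a precompact neighborhood of $\Sigma$; the noncompactness of the fibers is irrelevant, since only the compact piece $\Sigma$ has to be transported.) Then $\psi_\tau(\Sigma)$ is a compact subsurface of $\Reg M(\tau)$ diffeomorphic to $\Sigma$, so $g(\tau)=\genus(\Reg M(\tau))\ge\genus\bigl(\psi_\tau(\Sigma)\bigr)=\genus(\Sigma)\ge g$ for all $\tau\in J$, which is what we wanted.

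The routine but essential step is the third one: extracting the isotopy $\psi_\tau$ from the smooth family near the compact set $\Sigma$, via Ehresmann's theorem. I would also record the elementary fact used at the end, namely that $\genus(\Sigma)\le\genus(S)$ whenever $\Sigma$ is a compact subsurface of a surface $S$, which is immediate from the definition of genus as a supremum over compact subsurfaces.
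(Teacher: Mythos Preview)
Your proof is correct and follows essentially the same approach as the paper: choose a compact subsurface $\Sigma\subset\Reg M(t)$ of genus $\ge g$, use the definition of $\Reg$ to see that the flow is smooth near $\Sigma$ for nearby times, and transport $\Sigma$ diffeomorphically into $\Reg M(\tau)$. The only difference is in how the transport map is built: the paper uses the nearest-point projection $p\mapsto\phi(\tau,p)$ onto $\Reg M(\tau)$ (which is well-defined and smooth for $\tau$ close to $t$ by compactness of $\Sigma$), whereas you invoke Ehresmann/isotopy extension on the spacetime track. Both are valid; the nearest-point projection is slightly more elementary and avoids the need to discuss properness of the submersion.
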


\begin{proof}
Consider a time $T\in I$ and
suppose that $k$ is a nonnegative integer with $g(T)\ge k$.
Then there is $\Sigma\subset \Reg M(T)$ such that $\Sigma$ is a compact region
with smooth boundary and $\genus(\Sigma)\ge k$.

Note there is an $\eps>0$ such that for every $t\in I\cap (T-\eps,T+\eps)$, 
and for every $p\in \Sigma$, there is a unique point $\phi(t,p)$ in $\Reg M(t)$
closest to $p$. Let $\Sigma(t)$ be the image of $\Sigma$ under $\phi(\cdot,t)$.

Then $\Sigma(t)$ and $\Sigma$ are diffeomorphic, so
\[
  k = \genus(\Sigma) = \genus(\Sigma(t)) \le \genus(\Reg(M(t))
\]
for all $t\in I\cap (T-\eps,T+\eps)$.

Thus
\[
  k \le \liminf_{t\to T} g(t).
\]
Since this holds for all integers $k\le g(T)$, it also holds for $k=g(T)$.
\end{proof}

The following lemma is about
arbitrary left-lower-semiconscious functions.
We will apply it to the genus function in Lemma~\ref{lsc-lemma}.

\begin{lemma}\label{general-lemma}
Suppose 
that $g:[a,b]\to [0,\infty]$
is a left-lower-semicontinuous function.
Suppose also that for each $T\in [a,b)$, there exist $t>T$ arbitrarily close to $T$ for which $g(t)\le g(T)$.
Then $g(\cdot)$ is a decreasing
function on $[a,b]$.
\end{lemma}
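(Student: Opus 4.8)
The plan is to argue by contradiction, using the hypothesis to manufacture a decreasing ``staircase'' of times along which the value of $g$ only goes down, and then to extract a limit point where left-lower-semicontinuity is violated. So suppose $g$ is not decreasing on $[a,b]$: there exist $s<t$ in $[a,b]$ with $g(s)<g(t)$. Fix a real number $c$ strictly between $g(s)$ and $g(t)$ (this is possible since $g$ takes values in $[0,\infty]$; if $g(t)=\infty$ take any $c>g(s)$). Let $T_0:=\sup\{\tau\in[s,t]: g(\tau)\le c\}$. Since $g(s)\le c$ the set is nonempty, and since $g(t)>c$ we have $s\le T_0\le t$ and $T_0<t$ only in the sense that the supremum is taken over values where $g\le c$; we must be a little careful about whether the sup is attained.

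First I would establish the key inequality $g(T_0)\le c$. If $T_0=s$ this is immediate. Otherwise there is a sequence $\tau_n\uparrow T_0$ with $\tau_n\in[s,T_0)$ and $g(\tau_n)\le c$ for each $n$ (by definition of the supremum, we can find such $\tau_n$ arbitrarily close to $T_0$ from below with $g(\tau_n)\le c$). By left-lower-semicontinuity at $T_0$,
\[
  g(T_0)\le \liminf_{n\to\infty} g(\tau_n)\le c.
\]
In particular $g(T_0)\le c<g(t)$, so $T_0<t$, hence $T_0\in[a,b)$ and the second hypothesis applies at $T=T_0$: there exist $t'>T_0$ arbitrarily close to $T_0$ with $g(t')\le g(T_0)\le c$. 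Choosing such a $t'$ with $T_0<t'<t$ (possible since $t'$ can be taken as close to $T_0$ as we like and $T_0<t$) contradicts the definition of $T_0$ as the supremum of times in $[s,t]$ where $g\le c$. This contradiction shows $g$ is decreasing.

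The only subtle point---and the step I would be most careful about---is the handling of the supremum $T_0$ and the case distinctions around whether it is attained and whether $T_0=s$; the rest is a routine $\eps$-chase. Left-lower-semicontinuity is exactly the hypothesis that makes the limiting step $g(T_0)\le c$ go through, and the ``arbitrarily close from the right'' form of the second hypothesis is exactly what is needed to push past $T_0$ and contradict maximality. One should double-check that nothing breaks when $g(t)=\infty$ or when various times coincide, but these are degenerate cases that the argument above already covers by the choice of $c$ and the observation $g(T_0)\le c<g(t)$.
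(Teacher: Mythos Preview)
Your proof is correct and follows essentially the same approach as the paper's: both define the supremum of times where $g$ stays below a threshold, use left-lower-semicontinuity to bound $g$ at that supremum, and then invoke the second hypothesis to push past it for a contradiction. The only cosmetic differences are that the paper first reduces to showing $g(b)\le g(a)$ and uses $g(a)$ itself as the threshold, whereas you introduce an auxiliary $c\in(g(s),g(t))$, which is unnecessary but harmless.
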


\begin{proof}
It suffices to prove that $g(b)\le g(a)$.

Let $T$ be the supremum of $t\in [a,b]$ for which $g(t)\le g(a)$.
By left lower semicontinuity,
\begin{equation}
\label{oops}
g(T)\le g(a).
\tag{*}
\end{equation}
If $T<a$, then, by hypothesis, there would exist $t\in (T,b)$
for which
\[
    g(t)\le g(T) \le g(a),
\]
contradicting the choice of $T$.
Thus $T=b$ and therefore
\[
  g(b)=g(T) \le g(a)
\]
by~\eqref{oops}.
\end{proof}

Combining Lemmas~\ref{lsc-lemma} 
and~\ref{general-lemma}
gives the following:

\begin{theorem}\label{general-theorem}
Suppose 
that 
\[
  t\in [a,b]\mapsto M(t)
\]
is a $2$-dimensional,
standard Brakke flow
in a smooth Riemannian manifold~$N$,
and that all tangent flows
to~$M(\cdot)$ are given
by smooth, multiplicity~one shrinkers.  Let
\begin{align*}
&g:I\to [0,\infty], \\
&g(t) = \genus( \Reg M(t)).
\end{align*}
 Suppose $g(\cdot)$ has the 
following property:
\begin{equation}
\label{property}
  \begin{minipage}{0.8\textwidth} 
   If  $t_i>T$
are regular times converging to $T\in I$, 
and if $g(t_i)\ge n$ for all~$i$ (where $n$ is a positive integer), then
$g(T) \ge n$.
  \end{minipage}
  \tag{*}
\end{equation}
Then $g(\cdot)$ is a decreasing function on $[a,b]$. 
\end{theorem}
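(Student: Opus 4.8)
The plan is to derive the theorem by combining Lemma~\ref{lsc-lemma} with the elementary Lemma~\ref{general-lemma}. By Lemma~\ref{lsc-lemma} the function $g$ is lower-semicontinuous, hence in particular left-lower-semicontinuous, so the first hypothesis of Lemma~\ref{general-lemma} is automatic. Everything thus reduces to verifying the second hypothesis of that lemma: that for each $T\in[a,b)$ there exist $t>T$ arbitrarily close to $T$ with $g(t)\le g(T)$. Granting this, Lemma~\ref{general-lemma} at once gives that $g$ is decreasing on $[a,b]$, which is the assertion.

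To verify the second hypothesis I would argue by contradiction. Suppose it fails at some $T\in[a,b)$, so there is an $\eps>0$ with $g(t)>g(T)$ for all $t\in(T,T+\eps)\cap[a,b]$. If $g(T)=\infty$ this is impossible (and in any case any $t\in(T,b]$ would already have $g(t)\le g(T)$), so $g(T)$ is a finite nonnegative integer, say $g(T)=n$; since $g$ takes values in $\{0,1,2,\dots\}\cup\{\infty\}$, the strict inequality then forces $g(t)\ge n+1$ for every such $t$. Now I invoke the standard fact that the regular times of a standard Brakke flow form a dense subset of the time interval: the singular times are closed (by Theorem~\ref{ricci-theorem} together with the upper-semicontinuity of Gaussian density and unit regularity) and of measure zero (by the regularity theory for such flows). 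Hence I may pick regular times $t_i\in(T,T+\eps)$ with $t_i\to T$. These satisfy $g(t_i)\ge n+1$ for all $i$, so property~\eqref{property}, applied with the positive integer $n+1$ in place of $n$, yields $g(T)\ge n+1$, contradicting $g(T)=n$. This contradiction completes the verification, and hence the proof.

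This theorem is essentially a bookkeeping step, so there is no serious obstacle. The only input that is not purely formal is the density of the regular times among all times, and that is where the standing hypotheses---$M(\cdot)$ a standard Brakke flow, all tangent flows smooth shrinkers of multiplicity one---enter. One small point to get right is the index shift: because $g(t)>g(T)$ must be converted into $g(t)\ge g(T)+1$, property~\eqref{property} is invoked with $n+1$, not $n$. Beyond that, the argument is just the composition of Lemma~\ref{lsc-lemma}, the density statement, hypothesis~\eqref{property}, and Lemma~\ref{general-lemma}.
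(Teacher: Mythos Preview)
Your argument is correct and follows essentially the same route as the paper: lower-semicontinuity from Lemma~\ref{lsc-lemma}, density of regular times from the hypothesis on shrinkers, the contradiction via property~\eqref{property} with the index shifted to $n+1$, and then Lemma~\ref{general-lemma}. One small remark: your citation of Theorem~\ref{ricci-theorem} for the density of regular times is misplaced (that theorem concerns compactness of the spacetime support); the relevant input is stratification/regularity theory (the paper cites~\cite{white-stratification}), which gives that almost every time is regular under the smooth multiplicity-one shrinker hypothesis.
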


\begin{proof}
By Lemma~\ref{lsc-lemma}, $g(\cdot)$
is lower semicontinuous.
Now suppose that $T\in [a,b)$.
The hypothesis on shrinkers implies that
almost every time is  regular time
 (cf.~\cite{white-stratification}).
Thus there is a sequence $t_i>T$
of regular times converging to~$T$.
We claim that $g(t_i)\le g(T)$
for all sufficiently large~$i$.
If not, then (after passing to a subsequence) $g(t_i)>g(T)$ for all $i$, which implies that $g(T)<\infty$ and that $g(t_i)\ge g(T)+1$. But then $g(T)\ge g(T)+1$ by Property~\eqref{property}, a contradiction.
Thus $g(t_i)\le g(T)$ for all sufficiently large~$i$.
By Lemma~\ref{general-lemma}, $g(\cdot)$ is a decreasing function.
\end{proof}

\section{Spherical Singularites}
\label{spherical-section}

\begin{lemma}\label{sphere-lemma}
Suppose that $t\in I\mapsto M(t)$
is a $3$-dimensional integral Brakke flow in a smooth Riemannian $3$-manifold.
Suppose that $(0,0)$ is spacetime point
at which the tangent flow
is a multiplicity-one shrinking sphere.
Then there is an $R>0$ and an
$\eps>0$ such that 
for every regular time
 $t\in (0,\eps]$, 
 \[
    M(t)\cap B(0,R) = \emptyset.
 \]
\end{lemma}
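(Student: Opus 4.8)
I would argue by contradiction, using three standard facts. First, since the tangent flow to $M(\cdot)$ at $(0,0)$ is a multiplicity-one shrinking sphere, it is the \emph{unique} tangent flow there (\cite{cm-unique}); consequently, every sequence of parabolic dilations of $M$ about $(0,0)$ with dilation factors tending to $\infty$ subconverges, as Brakke flows, to this shrinking sphere $\mathcal S$. Second, $\mathcal S$ is supported in $\{t\le 0\}$: its spacetime support is the paraboloid swept out by the round spheres $\{|x|=c\sqrt{-t}\}$, $t\le 0$, so the only point of $\spt\mathcal S$ lying at time $0$ is the spacetime origin. Third, an integral Brakke flow has Gaussian density $\ge 1$ at every point of its support, whereas Gaussian density is upper semicontinuous with respect to Brakke-flow convergence.

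\textbf{The contradiction.} Suppose the lemma is false. Taking $R=\eps=1/j$, for each $j$ there is a regular time $t_j\in(0,1/j]$ and a point $p_j\in M(t_j)\cap B(0,1/j)$; in particular $(p_j,t_j)\in\spt M$ with $p_j\to 0$ and $t_j\to 0^+$. Set $\lambda_j:=1/\max(|p_j|,\sqrt{t_j})$, so $\lambda_j\to\infty$, and let $M^{(j)}$ be the parabolic dilation of $M$ about $(0,0)$ by the factor $\lambda_j$. Passing to a subsequence, $M^{(j)}\to\mathcal S$. In the dilated coordinates the point $(p_j,t_j)$ becomes $X_j:=(\lambda_j p_j,\lambda_j^2 t_j)\in\spt M^{(j)}$, and the choice of $\lambda_j$ forces $|\lambda_j p_j|\le 1$, $0<\lambda_j^2 t_j\le 1$, and $\max(|\lambda_j p_j|,\lambda_j\sqrt{t_j})=1$. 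Passing to a further subsequence, $X_j\to X_\infty=(\xi_\infty,\theta_\infty)$ with $\theta_\infty\ge 0$ and $\max(|\xi_\infty|,\sqrt{\theta_\infty})=1$.

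By the second fact, $X_\infty\notin\spt\mathcal S$: if $\theta_\infty>0$ this is immediate since $\spt\mathcal S\subset\{t\le 0\}$; and if $\theta_\infty=0$ then $|\xi_\infty|=1\ne 0$, so $X_\infty$ is not the spacetime origin and hence not in $\spt\mathcal S$. Choose a compact spacetime neighborhood $K$ of $X_\infty$ disjoint from the closed set $\spt\mathcal S$. The Gaussian density of $\mathcal S$ vanishes identically on $K$, so by the third fact $\sup_{X\in K}\Theta(M^{(j)},X)\to 0$; hence for all large $j$ we have $\Theta(M^{(j)},\cdot)<1$ throughout $K$ and therefore, again by the third fact, $K\cap\spt M^{(j)}=\emptyset$. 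This contradicts the fact that $X_j\in\spt M^{(j)}$ and $X_j\to X_\infty\in\operatorname{int}K$.

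\textbf{Main obstacle.} I expect the delicate point to be the choice of dilation factor. Since we know only that $p_j\to 0$ and $t_j\to 0^+$, with no control on the ratio of $|p_j|$ to $\sqrt{t_j}$, a naive normalization (such as $\lambda_j=1/|p_j|$ or $\lambda_j=1/\sqrt{t_j}$) could send $X_j$ to spatial or temporal infinity, where convergence of $M^{(j)}$ to $\mathcal S$ gives no information, or could let $X_j$ collapse to the spacetime origin, the one point of $\spt\mathcal S$ at time $0$. The normalization $\lambda_j=1/\max(|p_j|,\sqrt{t_j})$ avoids both problems at once; the remainder of the argument uses only Brakke-flow compactness, uniqueness of the tangent flow, and the semicontinuity and positivity properties of Gaussian density. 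In particular, neither Theorem~\ref{summary-theorem} nor Theorem~\ref{ricci-theorem} is needed for this lemma.
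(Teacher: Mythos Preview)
Your proof is correct and follows essentially the same approach as the paper: both argue by contradiction, normalize by $\lambda_j = 1/\max(|p_j|,\sqrt{t_j})$ to produce a subsequential limit point at parabolic distance~$1$ from the origin with nonnegative time coordinate, and observe that no such point lies in the support of the shrinking-sphere tangent flow. The only difference is that you spell out, via upper semicontinuity of Gaussian density, the passage from ``$X_j\in\spt M^{(j)}$ and $X_j\to X_\infty$'' to ``$X_\infty\in\spt\mathcal S$,'' which the paper asserts without further comment.
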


The proof is very straightforward, but we include it for completeness.

\begin{proof}
If not, there exist $t_i>0$
converging to $0$ and $p_i\in M(t_i)$ converging to $0$.
Let
\[
  r_i :=|(p_i,t_i)|
  =
  \max \{ |p_i|, \, |t_i|^{1/2}\}.
\]
Dilate the flow $M(\cdot)$
and the points $(p_i,t_i)$ parabolically by 
\[ 
  (x,t) \mapsto (r_i^{-1}x, 
                r_i^{-2}t)
\]
to get a flow $M_i(\cdot)$
and points $(p_i',t_i')$
such that $t_i'\ge 0$, 
\[
  p_i'\in M_i(t_i'),
\]
and $|(p_i',t_i')|=1$.

The $M_i(\cdot)$ converge
to the tangent flow
\begin{equation}\label{sphere-flow}
t\in \RR
\mapsto
M'(t)
=
\begin{cases}
    \partial B(0, 2|t|^{1/2})
    &\text{if $t\le 0$}, \\
    \emptyset
    &\text{if $t>0$}.
\end{cases}
\end{equation}
Let $(p',t')$ be a subsequential
limit of the $(p_i',t_i')$.
Then
\begin{gather*}
  t'\ge 0, \\
  p'\in M'(t'), \\
  |(p',t')| = 1,
\end{gather*}
which contradicts~\eqref{sphere-flow}.
\end{proof}

\section{Cylindrical Singularities}
\label{cylindrical-section}

\begin{theorem}
\label{cylinder-theorem}
Suppose that $t\in I\mapsto M(t)$ is a standard $2$-dimensional Brakke flow in a smooth Riemannian $3$-manifold,
and that $(0,0)$ is a spacetime point at which the tangent flow is a 
 multiplicity-one shrinking cylinder with axis $L$.
Let 
\[
 Q =
 \{(p,t)\ne (0,0): p\in M(t),\, t\ge 0\}.
\]
If $(p_i,t_i)\in Q$ converges to $(0,0)$, then
\begin{equation*}
  v_i:= \frac{p_i}{|p_i|}
\end{equation*}
converges (after passing to a 
subsequence) to 
a unit vector $v$ in $L$,
and thus
\begin{equation}\label{L-distance}
   \dist(p_i/|p_i|, L) \to 0.
\end{equation}
\end{theorem}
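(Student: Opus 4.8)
The plan is to argue by parabolic blow-up at the spacetime origin, using the fact that the tangent flow is a shrinking cylinder with axis $L$, together with a Vitali-type covering of the time interval $[0,\eps]$ by times at which the flow near the origin is well-modeled by the cylinder. First I would suppose, for contradiction, that some subsequence of $v_i = p_i/|p_i|$ converges to a unit vector $v$ that is \emph{not} in $L$; equivalently, $\dist(v,L) = \delta > 0$. Set $r_i := |(p_i,t_i)| = \max\{|p_i|, |t_i|^{1/2}\}$ and dilate the flow and the marked points parabolically by $(x,t)\mapsto (r_i^{-1}x, r_i^{-2}t)$, producing flows $M_i(\cdot)$ with marked points $(p_i',t_i')$ satisfying $t_i'\ge 0$, $p_i'\in M_i(t_i')$, and $|(p_i',t_i')| = 1$. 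Since $(0,0)$ has a shrinking-cylinder tangent flow of multiplicity one, the $M_i(\cdot)$ converge (in the Brakke sense, and smoothly away from the singular set by unit regularity) to the static cylinder tangent flow: $t\mapsto M'(t)$, where $M'(t)$ is the shrinking cylinder about $L$ for $t<0$, and for $t>0$ the flow has emptied out of the region near the axis — more precisely, $\spt M'(t)$ for $t>0$ is disjoint from a fixed neighborhood of the segment of $L$ near the origin (the cylinder shrinks to $L$ at time $0$ and then disappears).

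The key point is to track where the limit point $(p',t')$ of the $(p_i',t_i')$ can live. Passing to a further subsequence, $(p_i',t_i')\to (p',t')$ with $t'\ge 0$, $|(p',t')| = 1$, and $p'\in \spt M'(t')$. But the direction $p_i'/|p_i'| = p_i/|p_i| = v_i \to v$, so $p'/|p'| = v$, which has $\dist(v,L) = \delta > 0$; hence $p'$ lies at definite distance from the axis $L$. If $t' > 0$: at positive times $\spt M'(t')$ is empty near the origin, so $p'$ at distance $|p'| \le 1$ from the origin and at distance $\ge \delta|p'|$ from $L$ cannot lie in $\spt M'(t')$ unless $|p'|$ is bounded below — but in fact for the cylinder tangent flow, $\spt M'(t')$ for $t' > 0$ is empty in all of $B(0,\rho)$ for some fixed $\rho$ (the cylinder flow has vanished), and since $|p'| \le 1$ we can rescale at the outset to force $\rho \ge 1$, giving a contradiction. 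If $t' = 0$: then $|p'| = 1$ and $p'\in \spt M'(0) = L$ (the cylinder has shrunk to its axis), but $\dist(p', L) = \delta|p'| = \delta > 0$, again a contradiction. Either way we contradict $v\notin L$, so every subsequential limit of $v_i$ lies in $L$, which is exactly \eqref{L-distance}.

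The main obstacle is making precise the claim that for the \emph{cylinder} tangent flow the support genuinely vacates a fixed ball around the origin at positive times, and that the convergence $M_i \to M'$ is strong enough near such points to conclude $p'\in\spt M'(t')$. Unlike the sphere case (Lemma~\ref{sphere-lemma}), where the shrinker literally is $\partial B(0,2|t|^{1/2})$ and empties out cleanly, here the cylinder is noncompact, so I must be careful to work in a fixed small ball $B(0,\rho)$: inside it, the tangent flow is a static shrinking cylinder, which equals the axis $L$ at time $0$ and is empty in $B(0,\rho)$ for $0 < t < \rho^2/4$. Choosing the initial contradiction hypothesis to involve only $(p_i,t_i)$ with $|(p_i,t_i)|$ small, and rescaling so that the relevant ball has radius $\ge 1$, handles this. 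The convergence issue is resolved by Brakke/weak-set-flow convergence together with unit regularity: if $p'\in B(0,1)$ were a limit of $p_i'\in\spt M_i(t_i')$ but $p'\notin\spt M'(t')$, then $p'$ has a spacetime neighborhood disjoint from $\spt M'$, contradicting the convergence of supports. One also needs continuity of the flow in time at the marked points, i.e. $t_i'\to t'$ together with $p_i'\to p'$ implies the spacetime point $(p',t')$ is in the spacetime support of $M'$ — this is exactly the closedness of the spacetime support under Brakke limits.
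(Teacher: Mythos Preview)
Your argument is correct and takes the same parabolic-blowup approach as the paper: rescale by $r_i=\max\{|p_i|,|t_i|^{1/2}\}$, pass to the tangent flow, and observe that the limit spacetime point $(p',t')$ must satisfy $t'=0$ and $p'\in M'(0)=L$. The Vitali-type covering you announce is never used and not needed, and the $t'>0$ case is simpler than you make it---the shrinking-cylinder tangent flow is empty for \emph{all} $t>0$ (globally, not just in a ball $B(0,\rho)$), so no auxiliary rescaling is required; but these are cosmetic issues and do not affect correctness.
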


\begin{proof}
Let 
\[
  r_i:= 
  |(p_i,t_i)|
  =
\max\{|p_i|, \, |t_i|^{1/2}\}.
\]
Dilate the flow $M(\cdot)$
parabolically by 
\[
  (x,t) \mapsto
  (r_i^{-1}x, r_i^{-2}t)
\]
to get a flow $M_i(\cdot)$.
The flows $M_i(\cdot)$ converge
to the tangent flow 
\[
t\in \RR 
\mapsto
M'(t)
=
\begin{cases}
\{q: \dist(q,L)= 2^{1/2}|t|^{1/2}\}
&\text{if $t\le 0$}, \\
\emptyset &\text{if $t>0$}.
\end{cases}
\]
By passing to a subsequence, we can assume that the spacetime
points 
\[
   (r_i^{-1}p_i, r_i^{-2}t_i)
\]
converge to a limit $(p',t')$
such that
\begin{equation}
\label{triad}
\begin{gathered}
\max\{|p'|, |t'|^{1/2}\} = 1,
\\
p'\in M'(t'), 
\\
t'\ge 0.
\end{gathered}
\end{equation}
Since $t'\ge 0$ and since $M'(t')$
is nonempty, we see that $t'=0$
and that
\[
   M'(t')=M'(0)= L.
\]
Thus, by~\eqref{triad}, $p'$ is a unit
vector in $L$.
Since $p_i/r_i$ converges to a
unit vector, $v_i=p_i/|p_i|$ converges
to the same unit vector.
Consequently,
\[
 \dist(p_i/|p_i|, L) 
 \to
 \dist(v,L) = 0.
\]
\end{proof}

\begin{corollary}
\label{cylinder-corollary}
Suppose that $t_i\ge 0$ converges to $0$,
that $\mu_i$ tends to infinity, and that $q_i$ are points
with $\sup|q_i|<\infty$.
Then the surfaces
\begin{equation}\label{the-surfaces}
   \mu_i M(t_i) + q_i
   \tag{*}
\end{equation}
converge  to a subset of a line.
\end{corollary}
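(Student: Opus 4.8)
The plan is to reduce the statement to Theorem~\ref{cylinder-theorem} by a scaling argument. The key point is that the surfaces in~\eqref{the-surfaces} differ from the surfaces one would obtain by dilating about a point of $M(t_i)$ only by a bounded translation, and a bounded translation does not affect convergence-to-a-line since we are free to translate the limit line. So I would argue by contradiction: if the surfaces in~\eqref{the-surfaces} do not converge to a subset of a line, then (after passing to a subsequence) there are points $x_i, y_i \in \mu_i M(t_i) + q_i$ whose pairwise configuration witnesses a definite amount of non-collinearity; concretely, after a further subsequence, $x_i \to x$, $y_i \to y$ with $x,y$ and the origin (say) not collinear, or more simply the diameter of the portion of the surface in a fixed ball stays bounded while the surface fails to lie near any line. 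Either way we extract a pair of points on $\mu_i M(t_i)+q_i$ lying within a fixed ball but bounded away from every common line through them, up to a uniform error.

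Next I would translate back. Write $x_i = \mu_i p_i + q_i$ and $y_i = \mu_i p_i' + q_i$ with $p_i, p_i' \in M(t_i)$. Since $|x_i|, |y_i|$ are bounded and $\mu_i \to \infty$ while $|q_i|$ is bounded, we get $|p_i| = |x_i - q_i|/\mu_i \to 0$ and likewise $|p_i'| \to 0$; moreover $t_i \to 0$ with $t_i \ge 0$, so $(p_i, t_i)$ and $(p_i', t_i)$ lie in the set $Q$ of Theorem~\ref{cylinder-theorem} (they are not equal to $(0,0)$ for $i$ large, since otherwise $x_i$ or $y_i$ would just equal $q_i$ and we could discard those points or handle them separately). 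Theorem~\ref{cylinder-theorem} then gives that $p_i/|p_i|$ and $p_i'/|p_i'|$ both converge, after a subsequence, to unit vectors in the axis $L$; call them $v$ and $v'$, both of the form $\pm e$ for a fixed unit vector $e$ spanning $L$. Hence $p_i = |p_i|\, v + o(|p_i|)$ and $p_i' = |p_i'|\, v' + o(|p_i'|)$, so $\mu_i p_i$ and $\mu_i p_i'$ are, up to errors $o(\mu_i |p_i|) = o(|x_i - q_i|)$ and $o(|y_i - q_i|)$ which are bounded times $o(1)$, scalar multiples of $e$. Therefore $x_i - q_i$ and $y_i - q_i$ both converge to the line $\RR e$, i.e.\ $x_i$ and $y_i$ lie within $o(1)$ of the translated line $q_i + \RR e$.

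This already contradicts the assumed non-collinearity, but one should be slightly careful: $q_i$ itself need not converge, and the direction $e$ is fixed (it spans $L$), so the family of candidate lines is $\{q_i + \RR e\}$, which after passing to a subsequence with $q_i \to q$ (possible since $|q_i|$ is bounded, but actually we only need the component of $q_i$ orthogonal to $e$, and even that may be handled by noting the surface lies in a fixed ball) becomes a single line $q + \RR e$ in the limit. So the limit of $\mu_i M(t_i) + q_i$ (restricted to any fixed ball, which is all that matters for the stated conclusion) is contained in the line $q + \RR e$. I expect the main obstacle to be purely bookkeeping: making precise the statement ``converge to a subset of a line'' (presumably: every subsequential Hausdorff limit on compact sets is contained in some affine line) and matching it cleanly with the non-collinearity extraction, while keeping track that the single direction $e$ is forced by Theorem~\ref{cylinder-theorem} even though the translations $q_i$ vary. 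Once the right contradiction hypothesis is set up, the scaling computation is routine.
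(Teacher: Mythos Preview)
Your argument is correct and rests on the same key input as the paper's: Theorem~\ref{cylinder-theorem}'s conclusion~\eqref{L-distance} forces any subsequential limit of $\mu_i M(t_i)$ to lie in the axis~$L$, after which one translates by a subsequential limit $q$ of the bounded sequence~$q_i$. The paper's proof does exactly this, but directly rather than by contradiction: pass to a subsequence with $\mu_i M(t_i)\to S$ and $q_i\to q$, observe $S\subset L$ by~\eqref{L-distance}, and conclude that the surfaces~\eqref{the-surfaces} converge to $S+q\subset L+q$; your witness-point extraction is unnecessary.
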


\begin{proof}
By passing to a subsequence, we 
can assume that the surfaces
$\mu_iM(t_i)$ converge 
to a set $S$, and that the $q_i$
converge to a point $q$.
By~\eqref{L-distance} in Theorem~\ref{cylinder-theorem}, 
the set $S$ is contained in the line $L$.
Thus the surfaces~\eqref{the-surfaces}
converge to
$S+q$, which is contained in the line $L+q$.
\end{proof}

\begin{theorem}
\label{morse-theorem}
Under the hypotheses of Theorem~\ref{cylinder-theorem},
there exist $R>0$ and $\eps>0$
with the following property.
For each $t\in [0,\eps]$, the
function
\[
 f_t:  p\in \Reg M(t)\cap B(0,R)
   \mapsto |p|
\]
is a smooth Morse function,
and each critical point is a local maximum or a local minimum.
\end{theorem}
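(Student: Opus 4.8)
The plan is to argue by contradiction, reducing to a blowup analysis at the singular point $0$. If the conclusion failed, there would be radii $R_i\searrow 0$ and times $t_i\in[0,R_i]$, together with critical points $p_i\in\Reg M(t_i)\cap B(0,R_i)$ of $f_{t_i}$ at which $f_{t_i}$ is either degenerate or has a saddle; since $|p_i|<R_i$ we have $p_i\to 0$ and $t_i\to 0$. Each $(p_i,t_i)$ is a regular spacetime point converging to $(0,0)$, so Theorem~\ref{summary-theorem} gives that the mean curvature norm $h_i$ of $M(t_i)$ at $p_i$ tends to $\infty$ and, after passing to a subsequence, $M_i':=h_i(M(t_i)-p_i)$ converges smoothly to a convex surface $M_\infty'$. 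The first observation is that $M_\infty'$ cannot be a cylinder: at the critical point $p_i$ the position vector $p_i$ is normal to $M(t_i)$, so the unit normal there is $\pm p_i/|p_i|$, which by Theorem~\ref{cylinder-theorem} converges to a unit vector $v$ along the axis $L$; since homotheties preserve normal directions, the unit normal to $M_\infty'$ at $0$ is $\pm v\in L$, whereas a cylinder with axis parallel to $L$ has unit normal perpendicular to $L$. Hence $M_\infty'$ is a compact uniformly convex surface or a bowl soliton, so it is strictly convex, its principal curvatures $\mu_1,\mu_2$ at $0$ with respect to the inner normal are strictly positive, and $\mu_1+\mu_2=1$ because $M_i'$ (hence $M_\infty'$) has mean curvature of norm $1$ at $0$.

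Next I would compute the relevant Hessian. Using that $\operatorname{Hess}_{M(t_i)}(\tfrac12|p|^2)(X,X)=|X|^2+\langle p_i,\mathrm{II}(X,X)\rangle$, that $p_i$ is normal to $M(t_i)$ at $p_i$, and that the second fundamental form of $N$ in the ambient Euclidean space contributes only a lower-order term (as $|p_i|\to 0$), one finds that the Hessian of $\tfrac12|p|^2$ on $M(t_i)$ at $p_i$ has eigenvalues $1+|p_i|\lambda_j^{(i)}$, $j=1,2$, up to errors negligible in the limit, where $\lambda_j^{(i)}$ are the principal curvatures of $M(t_i)$ at $p_i$ with respect to the normal $p_i/|p_i|$. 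Since $M_i'\to M_\infty'$ smoothly, $\lambda_j^{(i)}=h_i\kappa_j^{(i)}$ with $\kappa_j^{(i)}\to\pm\mu_j$, the common sign being $+$ if $v$ is the inner normal of $M_\infty'$ and $-$ if it is the outer normal. Writing $c_i:=|p_i|h_i$, the eigenvalues are $1+c_i\kappa_j^{(i)}$ up to negligible errors, with $\kappa_1^{(i)},\kappa_2^{(i)}$ of a common sign and bounded away from $0$. Passing to a further subsequence, $c_i\to c\in[0,\infty]$. If $c=0$, both eigenvalues tend to $1$; if $c=\infty$, both tend to $+\infty$ or both to $-\infty$. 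In each of these cases the Hessian of $f_{t_i}$ at $p_i$ is definite and nondegenerate for large $i$, so $p_i$ is a nondegenerate local maximum or minimum---contradicting the choice of $p_i$.

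The remaining case, $c\in(0,\infty)$, is the one I expect to be the main obstacle: then the limiting eigenvalues are $1-c\mu_j$, which can have opposite signs (a saddle) or vanish (degenerate), so the geometry must rule out a critical point at which the distance-to-singularity scale $|p_i|$ and the curvature scale $h_i^{-1}$ are comparable. To exclude it, rescale at the singular point rather than at $p_i$: set $M_i(\cdot):=|p_i|^{-1}M(|p_i|^2\,\cdot)$, which converge to the shrinking-cylinder tangent flow $M'$ by uniqueness of tangent flows (\cite{cm-unique}). At rescaled time $s_i:=t_i/|p_i|^2$ the time slice is $M_i(s_i)=|p_i|^{-1}M(t_i)$, and $s_i\to 0$ because the tangent flow is empty at positive times. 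Since $M'(0)=L$, on every fixed ball $M_i(s_i)$ is eventually contained in a neighborhood of the line $L$ whose thickness tends to $0$. On the other hand, $M_i(s_i)=c_i^{-1}M_i'+p_i/|p_i|$, so if $c_i\to c\in(0,\infty)$ then, near $v$, $M_i(s_i)$ converges smoothly to the genuinely two-dimensional surface $c^{-1}M_\infty'+v$, which cannot be contained in an arbitrarily thin neighborhood of a line. This contradiction rules out $c\in(0,\infty)$ and completes the proof.
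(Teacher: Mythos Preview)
Your proof is correct and follows essentially the same approach as the paper: argue by contradiction, rescale by $h_i$ via Theorem~\ref{summary-theorem}, and use Corollary~\ref{cylinder-corollary} to control the scale $h_i|p_i|$. The paper organizes the steps slightly differently---it first proves $h_i|p_i|\to\infty$ (which disposes of your cases $c=0$ and $c\in(0,\infty)$ simultaneously), then observes that the rescaled distance functions converge to the linear function $x\mapsto x\cdot v$, whose degenerate-or-saddle critical point at $0$ forces $M'$ to be a cylinder with axis parallel to $L$, which is then ruled out for exactly the reason in your normal-direction argument.
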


\begin{proof}
Suppose not. Then there exist
regular points 
 $(p_i,t_i)\in Q$ converging to 
  $(0,0)$ such that $p_i$ is a critical point of $f_{t_i}$ and
  such that either (1) it has positive nullity, or (2) it has nullity $0$ and index $1$.

Let
\[
   h_i := h(M(t_i),p_i),
\]
be the length of the mean curvature vector of $M(t_i)$ 
at the point~$p_i$.
Translate $0$ and $M(t_i)$ by $-p_i$
and then dilate by $h_i$ to get
\begin{align*}
    q_i &:= h_i(0-p_i) = -h_ip_i ,\\
    M_i &:= h_i(M(t_i)-p_i).
\end{align*}

By 
Theorem~\ref{summary-theorem},  $h_i\to\infty$, and we can
assume (by passing to a subsequence)
that the $M_i$
converge smoothly to a limit $M'$
containing $0$, where
\begin{equation}
    \label{trichotomy}
\begin{gathered}
\text{$M'$ is a
compact, uniformly convex
surface,}
\\
\text{or a bowl soliton, or
a cylinder.}
\end{gathered}
\end{equation}

\begin{claim} 
$h_i|p_i|\to \infty$.
\end{claim}

\begin{proof}[Proof of claim]
Suppose not. Then we can assume that  $-h_ip_i$ converges to a finite limit $q$.
By Corollary~\ref{cylinder-corollary}, the surfaces
\[
 h_i(M(t_i)-p_i) = h_iM(t_i) - h_ip_i
\]
converge (after passing to a subsequence) to a subset of
the line $L+q$.
But that is impossible since
those surfaces converge to $M'$, 
which is not contained in a line.
Thus $h_i|p_i|\to\infty$, as claimed.
\end{proof}

Let
\[
   f_i(x) 
   =
   \dist(x, - h_ip_i) 
   -
   \dist(0, -h_ip_i).
\]
Since $h_i|p_i|\to\infty$
and since $p_i/|p_i|$ converges
to a unit vector $v\in L$,
we see that $f_i(\cdot)$
converges smoothly to the function
\begin{align*}
 &f: M'\to \RR, \\
 &f(x) = x\cdot v.
\end{align*}
Note also that $0$ is a critical point of $x$, and either it has
nullity $>0$, or it has nullity $0$ and index $1$.

Thus we see that either:
\begin{enumerate}
\item at least one of the principal
curvatures of $M'$ is $0$, or
\item one of the principal curvatures is positive and the
other is negative.
\end{enumerate}
Thus $M'$ must
be a cylinder by~\eqref{trichotomy}.
By~\cite{cm-new},  the axis of the cylinder
is parallel to $L$.
But then $f$ has no critical points since $v\in L$,
a contradiction.
\end{proof}

\begin{corollary}
\label{morse-corollary}
For every regular time
 $t\in (0,\eps]$ and for every
 $0<r\le R$
\[
   M(t)\cap B(0,r)
\]
is a disjoint union of
disks and spheres.
\end{corollary}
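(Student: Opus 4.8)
The plan is to deduce everything from the Morse function $f_t$ supplied by Theorem~\ref{morse-theorem}, using the elementary fact that a surface carrying a Morse function whose only critical points are local maxima and minima admits a handle decomposition with only $0$-handles and $2$-handles---no $1$-handles---and is therefore a disjoint union of disks and spheres. So the real work is just to run the Morse theory cleanly on $M(t)\cap B(0,r)$ and to reconcile a closed-ball argument with the open-ball statement for arbitrary $r\le R$.

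\textbf{The case of a regular value.} Fix $R,\eps$ as in Theorem~\ref{morse-theorem}, a regular time $t\in(0,\eps]$, and a regular value $\rho<R$ of $f_t$ (regular values are dense, since the critical set of the Morse function $f_t$ is discrete, hence countable, so the set of critical values has measure zero). Since $t$ is a regular time, $\overline{B(0,\rho)}\cap M(t)=f_t^{-1}([0,\rho])$ is a compact smooth surface with boundary $f_t^{-1}(\rho)$, and $f_t$ restricts to a Morse function on it with no critical point on the boundary and with every critical point of index $0$ or $2$. Building this surface up by passing its critical values in increasing order, each index-$0$ point contributes a disjoint $0$-handle (a disk) and each index-$2$ point attaches a $2$-handle (caps off a boundary circle with a disk); there are no $1$-handles. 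Hence each connected component comes from exactly one $0$-handle together with at most one $2$-handle, so it is a disk or a sphere. (Equivalently: on a component, $f_t$ has exactly one minimum and at most one maximum, because components of sublevel sets are created only at minima and, there being no saddles, are never merged; so the component has Euler characteristic $1$ or $2$ and at most one boundary circle, forcing a disk or sphere.) Passing to interiors, $M(t)\cap B(0,\rho)$ is a disjoint union of (open) disks and spheres for every regular value $\rho<R$.

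\textbf{Arbitrary $r$.} Now fix $0<r\le R$ and write $M(t)\cap B(0,r)=\bigcup_{\rho}\bigl(M(t)\cap B(0,\rho)\bigr)$, the union over regular values $\rho<r$ (and, if $r=R$, over regular values $\rho<R$). Note $M(t)\cap B(0,r)$ is a surface without boundary, being open in the smooth surface $M(t)$, and the closure of any component is compact, being a closed subset of $\overline{B(0,r)}$. Let $C$ be a connected component. If $C$ is compact, then $C\subset B(0,\rho)$ for some regular $\rho<r$, so $C$ is a compact component of $M(t)\cap B(0,\rho)$ and hence a sphere by the previous paragraph. If $C$ is non-compact, then $C$ cannot contain any sphere component $S$ of any $M(t)\cap B(0,\rho)$: such an $S$ would be both closed and (by invariance of domain) open in $C$, hence equal to $C$, contradicting non-compactness. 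Therefore every $C\cap B(0,\rho)$ is a union of disk components of $M(t)\cap B(0,\rho)$; since every loop in $C$ lies in some such $C\cap B(0,\rho)$, it is null-homotopic, so $C$ is simply connected. A simply connected surface without boundary is $S^2$ or $\RR^2$, so the non-compact component $C$ is an open disk. This gives the corollary.

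\textbf{Main obstacle.} The Morse-theoretic core---no index-$1$ critical points forces a handle decomposition with only $0$- and $2$-handles---is entirely standard (cf.\ Milnor's treatment of Morse theory), so I expect the only delicate point to be the bookkeeping in the third paragraph: matching the compact-surface-with-boundary picture against the open ball for non-regular values and $r=R$, and in particular excluding non-compact, non-simply-connected components such as a half-open cylinder.
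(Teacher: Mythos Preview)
Your argument is correct. The paper states Corollary~\ref{morse-corollary} without proof, treating it as an immediate consequence of Theorem~\ref{morse-theorem}; your write-up supplies exactly the standard Morse-theoretic details (no index-$1$ critical points $\Rightarrow$ only $0$- and $2$-handles $\Rightarrow$ disks and spheres) that the paper leaves implicit, together with the careful passage from regular sublevels to arbitrary $r\le R$ via an exhaustion and the classification of simply connected surfaces.
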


\begin{corollary}
\label{sc-corollary}
Suppose $t_i>T$ are regular times converging to $T$.
Suppose $\gamma_i$ are simple closed curves in $M(t_i)$ converging to $p$.
(That is $\gamma_i\subset B(p,r_i)$ for some $r_i\to 0$.)
Then for all sufficiently large $i$, $\gamma_i$ bounds a disk in $M(t_i)\cap B(p,r_i)$.
\end{corollary}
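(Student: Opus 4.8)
The plan is to reduce everything to the local structure results already established in this section. After translating in space and time so that $(p,T)=(0,0)$, matching the normalization of Theorem~\ref{cylinder-theorem}, I would split into cases according to the type of the tangent flow of $M(\cdot)$ at $(0,0)$, which under the standing hypotheses is a multiplicity-one plane, sphere, or cylinder.

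In the plane case, $(0,0)$ is a regular point, so for $t$ near $0$ the flow $M(t)$ is a smooth surface depending smoothly on $t$; a sufficiently small ball about $0$ meets it in a disk, and the conclusion is immediate. In the sphere case, Lemma~\ref{sphere-lemma} provides $R,\eps>0$ with $M(t)\cap B(0,R)=\emptyset$ for every regular time $t\in(0,\eps]$; since $t_i>0$, $t_i\to 0$, and $r_i\to 0$, for all large $i$ the curve $\gamma_i$ would have to lie in the empty set $M(t_i)\cap B(0,R)$, which is absurd, so the hypothesis of the corollary is vacuous for large $i$ and there is nothing to prove. The cylinder case is the only one requiring work.

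So suppose the tangent flow at $(0,0)$ is a multiplicity-one shrinking cylinder, and take the $R,\eps>0$ of Corollary~\ref{morse-corollary}. For $i$ large enough that $t_i\in(0,\eps]$ and $r_i<R$, I would pick $s_i\in(0,r_i)$ with $\gamma_i\subset B(0,s_i)$; this is possible because $\gamma_i$ is compact and contained in the open ball $B(0,r_i)$. By Corollary~\ref{morse-corollary}, $M(t_i)\cap B(0,s_i)$ is a disjoint union of disks and spheres; the connected curve $\gamma_i$ therefore lies in a single component $C_i$, and since it misses $\partial B(0,s_i)$ it lies in the manifold interior of $C_i$. A simple closed curve in the interior of a disk, or in a $2$-sphere, bounds a disk (Jordan--Schoenflies), so $\gamma_i$ bounds a disk in $C_i\subset M(t_i)\cap B(0,s_i)\subset M(t_i)\cap B(0,r_i)$, as required.

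I do not expect a genuine obstacle here: the analytic content is entirely packaged into Lemma~\ref{sphere-lemma} and Corollary~\ref{morse-corollary}. The only points that demand a little care are the elementary topology --- that a simple closed curve lying in the interior of a disk, or in a $2$-sphere, bounds a disk --- and the harmless bookkeeping of passing from $r_i$ to a slightly smaller radius $s_i$, which ensures $\gamma_i$ stays off the boundary sphere and hence lies in a single component of the decomposition.
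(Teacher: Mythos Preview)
Your proof is correct and matches the paper's approach: the paper states this as an unproved corollary immediately after Corollary~\ref{morse-corollary}, so the intended argument is exactly the one you give in the cylinder case---apply Corollary~\ref{morse-corollary} and then Jordan--Schoenflies. Your additional treatment of the plane and sphere cases is more than the paper requires (the corollary sits in the cylindrical-singularities section and is only invoked at cylindrical points), but it is harmless and correct.
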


\begin{comment}
\begin{align*}
\alpha, \\
\beta, \\
\gamma,\, \Gamma \\
\delta, \, \Delta  \\
\epsilon, \\
\eta, \\
\iota, \\
\kappa, \\
\lambda, \Lambda \\
\mu, \, \Mu,  \\
\nu, \, \Nu, \\
\omicron, \\
\pi, \Pi\\
\rho, \\
\sigma, \, \Sigma \\
\tau, \\
\xi, \, \Xi\\
\zeta, \\
\omega, \, \Omega, \\
\chi, \, \Chi  \\
\theta, \, \Theta  \\
\phi, \, \varphi , \, \Phi  
\end{align*}
\end{comment}

\section{Geodesics}
\label{geodesics-section}

\begin{theorem}\label{geodesics-theorem}
Suppose that $t\in [a,b]\mapsto M(t)$
is a $2$-dimensional 
standard Brakke flow 
in a smooth 
 Riemannian $3$-m manifold.
Suppose  that $T\in I$
is a time
such that $\Sing M(T)$ is compact
and 
  at which each tangent flow is given
by a multiplicity~one shrinker that is a plane, a sphere, or a cylinder.
Suppose $t_i>T$ are regular times that converge to $T\in I$.
Let $\Gamma_i$ be a $1$-cycle that minimizes length in its
mod~$2$ homology class in $M(t_i)$.
(Thus $\Gamma_i$ is a finite collection of disjoint, simple closed geodesics, each with multiplicity one.)
Then, as $i\to\infty$,  $\Gamma_i$ is bounded away
from $\Sing M(T)$.
That is, there is an $\eps>0$
such that, for all sufficiently large $i$, no point of $\Gamma_i$
is within distance $\eps$
of any point in $\Sing M(T)$.
\end{theorem}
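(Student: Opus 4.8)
\emph{Outline.} I would argue by contradiction, localize the failure near a single singular point, and dispose of the spherical and cylindrical tangent flows separately. Suppose the conclusion fails. After passing to a subsequence there are points $q_i\in\Gamma_i$ with $\dist(q_i,\Sing M(T))\to 0$, and since $\Sing M(T)$ is compact a further subsequence gives $q_i\to q\in\Sing M(T)$. The tangent flow at $(q,T)$ cannot be a multiplicity-one plane, since unit regularity would then put $q$ in $\Reg M(T)$; by hypothesis it is therefore a multiplicity-one shrinking sphere or shrinking cylinder. If it is a sphere, translate so that $(q,T)=(0,0)$ and invoke Lemma~\ref{sphere-lemma}: there are $R,\eps>0$ with $M(t)\cap B(0,R)=\emptyset$ for every regular $t\in(0,\eps]$; but for large $i$ we have $q_i\in\Gamma_i\subset M(t_i)$ with $q_i\in B(0,R)$ and $t_i\in(0,\eps]$ regular, a contradiction. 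The spherical case is thus immediate, and the cylindrical case carries all the weight.

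So assume the tangent flow at $(q,T)=(0,0)$ is a shrinking cylinder with axis $L$, and fix $R,\eps>0$ as in Theorem~\ref{morse-theorem} and Corollary~\ref{morse-corollary}, so that for regular $t\in(0,\eps]$ the set $M(t)\cap B(0,r)$ ($0<r\le R$) is a disjoint union of disks and spheres and $p\mapsto|p|$ is a Morse function on each disk component whose only interior critical point is a minimum. Let $\gamma_i$ be the component of $\Gamma_i$ containing $q_i$. First, $\gamma_i\not\subset B(0,R)$ for $i$ large: otherwise the simple closed curve $\gamma_i$ would lie in a single disk or sphere component of $M(t_i)\cap B(0,R)$ and hence bound a disk in $M(t_i)$, so deleting it from $\Gamma_i$ (or, if $\Gamma_i=\gamma_i$, passing to the empty cycle, which is legitimate since then $[\Gamma_i]=0$) would yield a strictly shorter $1$-cycle in the same mod-$2$ class, contradicting minimality. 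Hence the component of $\gamma_i\cap B(0,R)$ through $q_i$, closed up, is a proper subarc $\alpha_i$ of $\gamma_i$ with endpoints $x_i,y_i$ on $\partial B(0,R)$; it lies in a single component $D_i$ of $M(t_i)\cap B(0,R)$, which must be a disk since it meets $\partial B(0,R)$. Because $|q_i|\to 0$ and $|\cdot|$ is $1$-Lipschitz, $\length(\alpha_i)\ge 2(R-|q_i|)=2R-o(1)$.

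The contradiction will come from replacing $\alpha_i$ by a strictly shorter path $\beta_i$ in $D_i$ with the same endpoints: since $D_i$ is simply connected, $\alpha_i$ and $\beta_i$ are homotopic rel endpoints in $M(t_i)$, so $(\Gamma_i\setminus\alpha_i)\cup\beta_i$ represents the same mod-$2$ homology class, and $\length(\beta_i)<\length(\alpha_i)$ then contradicts minimality of $\Gamma_i$. To build $\beta_i$, rescale about $0$ by a factor $\mu_i\to\infty$ comparable to $1/\dist(\gamma_i,0)$: by Corollary~\ref{cylinder-corollary} the rescaled surface converges, inside every fixed ball, to a subset of $L$, so inside a fixed ball $B(0,\Lambda)$ the rescaled copy of $D_i$ is a ``thin finger'' contained in a neighbourhood of $L$ of width tending to $0$. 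Its boundary circle, being connected and lying near $L$ on $\{|\cdot|=\Lambda\}$ --- a set consisting of two small caps --- is contained in one cap and so has extrinsic diameter tending to $0$; thus the rescaled endpoints of $\alpha_i$ are within $o(1)$ of one another. The Morse property of Theorem~\ref{morse-theorem} forbids the finger from doubling back in the radial variable $|\cdot|$, so the finger is radially monotone and, being roughly parallel to $L$, has intrinsic diameter comparable to its radial depth; joining the two endpoints by an intrinsic geodesic of $D_i$ then gives, after unscaling, a path $\beta_i$ with $\length(\beta_i)\le(2-c)R+o(1)$ for some fixed $c>0$, whereas $\length(\alpha_i)\ge 2R-o(1)$. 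That is the contradiction.

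The step I expect to be the main obstacle is the last one: the quantitative geometry of $D_i$ near the cylindrical singularity. One must show that, after rescaling, the disk components genuinely hug the axis $L$ --- with small boundary circles, no radial hairpins, and controlled ``tilt'', so that $|\cdot|$ has gradient bounded below on them and the finger's intrinsic diameter is comparable to its radial depth rather than, say, its circumference times a winding number --- so that $\alpha_i$'s two traversals of the radial depth are genuinely wasteful compared with an intrinsic geodesic joining its endpoints. This is where the finer structure theory of cylindrical singularities (the same analysis underlying Theorem~\ref{summary-theorem} and Corollary~\ref{cylinder-corollary}) has to be brought to bear; the rest of the argument is soft.
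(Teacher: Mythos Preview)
Your setup and the spherical case are fine. In the cylindrical case, however, there is both a scale confusion and a more basic obstruction. Corollary~\ref{cylinder-corollary} says that $\mu_i M(t_i)$ converges to a subset of $L$ only inside \emph{fixed} balls---in the original coordinates, inside $B(0,\Lambda/\mu_i)$ with $\Lambda/\mu_i\to 0$. It tells you nothing about $D_i$ near $\partial B(0,R)$, where the endpoints $x_i,y_i$ of $\alpha_i$ actually sit, so the bound $\length(\beta_i)\le(2-c)R+o(1)$ at the fixed scale $R$ is unsupported. Even if you work entirely at the small scale $\Lambda/\mu_i$, the corollary gives only \emph{set} convergence to a line; it provides no control on tangent planes, on the circumference of the boundary circle, or on intrinsic distances in the finger, so ``intrinsic diameter comparable to radial depth'' does not follow from what you cite. (A simple closed curve lying in an $\eps$-cap of a sphere can still have arbitrarily large length.)

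The deeper issue is that a pure length-shortcut argument cannot close the cylindrical case, and the paper does not attempt one. The paper rescales by the mean curvature $h_i$ at $p_i$ and invokes Theorem~\ref{summary-theorem} to obtain \emph{smooth} convergence of $h_i(M(t_i)-p_i)$ to a compact convex surface, a bowl soliton, or a cylinder; the rescaled $\gamma_i$ then converge to a mod~$2$ minimizing geodesic $\gamma'$ in the limit $M'$. The convex and bowl cases are eliminated because those surfaces carry no such geodesic (Lemma~\ref{bowl-lemma}). In the cylinder case $\gamma'$ is either a cross-sectional circle---so $\gamma_i$ shrinks to $p$ and bounds a disk by Corollary~\ref{sc-corollary}---or a line parallel to the axis. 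A line on a cylinder is already length-minimizing, so no shortcut exists at any scale; instead the paper pulls back a cross-sectional circle to a small loop $\delta_i\subset M(t_i)$ meeting $\gamma_i$ transversely in exactly one point, hence homologically nontrivial in $M(t_i)$, and then contradicts Corollary~\ref{sc-corollary} since $\delta_i\to p$. This intersection-number step is the crux of the cylindrical case, and your metric scheme has no substitute for it.
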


\begin{proof}
Suppose not.  Then (after passing to a subsequence) there exist  $p_i\in \Gamma_i$ such that $\dist(p_i, \Sing M(T)) \to 0$.
Let $\gamma_i$ be the connected
component of $\Gamma_i$
that contains~$p_i$.
By passing to a further subsequence, we may assume that $p_i$ converges to a point $p\in \Sing M(T)$.
By Lemma~\ref{sphere-lemma}, the tangent flow to $M(\cdot)$ at $(p,T)$
must be a shrinking cylinder.

By Theorem~\ref{summary-theorem}, the
 length~$h_i$ of 
 the mean curvature vector of $M(t_i)$ at $p_i$ tends to $\infty$,
 and the surfaces 
\[
  M_i' := h_i(M_i-p_i)
\]
converge smoothly 
 (after passing to 
 a subsequence) to a limit $M'$
 that is one of the following:
\begin{enumerate}
\item a compact, uniformly convex  surface.
\item a bowl soliton.
\item a cylinder.
\end{enumerate}

It follows that the curves
\[
  \gamma_i':= h_i(\gamma_i-p_i)
\]
converge smoothly (after passing to a further subsequence) to a geodesic $\gamma'$ that is minimizing mod $2$ in $M'$.

Neither a compact convex set nor
a bowl soliton contains a geodesic
that is minimizing mod $2$. (See Lemma~\ref{bowl-lemma} below.)

Thus $M'$ is a cylinder $S\times \RR$, where $S$ is a circle.

Note that the lift of $\gamma'$ to the universal cover (a plane) of $M'$ is minimizing mod $2$,
and therefore is a single line. 
Thus $\gamma'$ is either
a closed curve $S\times \{c\}$ for some $c$,
or a geodesic of the form
\begin{equation}
\label{geodesic-form}
  \{ (\cos\theta, \sin \theta, v): \theta = sv + \alpha\}
\end{equation}
for some $s$.
(In fact, the minimizing mod $2$ property implies that $s=0$, but we don't need that fact.)

If $\gamma'$ were a closed curve, then $\gamma_i$ would 
be a simple closed curve converging to $p$.
But then,
by Corollary~
\ref{sc-corollary},
given $r>0$, $\gamma_i$
would (for all sufficiently 
  large~$i$)
  bound a disk
  in $M(t_i)\cap B(p,r)$,
 contrary to the minimizing property of $\Gamma_i$.
 
Thus $\gamma'$ is a nonclosed geodesic of the
 form~\eqref{geodesic-form}.  
Let $\delta$ be a closed geodesic in $M'$.   Then there exist simple closed curves $\delta_i$ in $M(t_i)$
such that
\[
    \delta_i':= h_i(\delta_i - p)
\]
converges smoothly to $\delta$.

Since $\delta$ intersects $\gamma'$ transversely in exactly one point, we see that, for large $i$,
$\delta_i$ intersects $\gamma_i$ transversely in exactly one point.
Thus $\delta_i$ is homologically nontrivial in $M(t_i)$.
But that contradicts
 Corollary~\ref{sc-corollary}, since $\delta_i$ converges to $p$.
\end{proof}

In the proof of
Theorem~\ref{geodesics-theorem},
we used the following lemma:

\begin{lemma}\label{bowl-lemma}
Let $M'$ be a bowl soliton, and let $\gamma'$ be a properly embedded geodesic in $M'$.
Then $M'$ is not homologically minimizing mod~$2$ in $M'$.
\end{lemma}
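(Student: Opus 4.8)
The plan is to exploit the geometry of the bowl soliton, namely that it is a graph over a plane and has a single ``tip'' with strictly positive mean curvature everywhere. A bowl soliton $M'$ in $\RR^3$ is, after a rigid motion, the graph of a rotationally symmetric, strictly convex function $u\colon \RR^2\to \RR$ that is asymptotic to a paraboloid; its Gauss curvature is everywhere positive and it is diffeomorphic to $\RR^2$. The key point is that $M'$ is \emph{convex} (it bounds a convex body on one side) and is a \emph{graph} over a plane $P$. I would argue that a properly embedded geodesic $\gamma'$ in such a surface cannot be homologically minimizing mod $2$, essentially because $M'$ is simply connected, so the only mod $2$ homology class is the trivial one, and a nonempty $1$-cycle cannot minimize in the trivial class once we show it must separate off a region of finite area that it could be pushed across.

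The key steps, in order, would be the following. First, recall that $H_1(M';\ZZ/2)=0$ since $M'\cong\RR^2$; hence the only mod $2$ homology class of $1$-cycles is $0$, and a $1$-cycle $\Gamma$ is homologically minimizing mod $2$ precisely when $\Gamma$ has least length (equivalently, is length-minimizing) among all $1$-cycles bounding mod $2$, i.e., among all boundaries. Second, observe that a properly embedded geodesic $\gamma'$ in $M'$, being a closed subset of a properly embedded surface and a complete geodesic, is either compact (a closed geodesic) or noncompact and proper. Third, rule out a closed geodesic: a strictly convex surface contains no closed geodesic that bounds, because a simple closed geodesic on a convex surface bounds a disk whose total Gauss curvature is $2\pi>0$ by Gauss--Bonnet, and such a geodesic could always be shortened by pushing it toward the disk's interior (or, more robustly, compare it with a nearby shorter curve using positivity of curvature), contradicting length-minimality in the trivial class. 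Fourth, for a noncompact properly embedded geodesic $\gamma'$: since $M'$ is a graph over $P$ asymptotic to a paraboloid, the projection of $\gamma'$ to $P$ is a properly embedded curve; a minimizing geodesic in the trivial homology class must be empty (the empty cycle has length $0\le \length(\gamma')$, with equality only if $\gamma'$ is empty). So actually the heart of the matter is simply: \emph{in a simply connected surface every $1$-cycle is null-homologous, so a nonempty $1$-cycle cannot be length-minimizing in its homology class unless that infimum is achieved by the empty set}, i.e., the infimum over the trivial class is $0$, attained only by the empty cycle. Hence no nonempty geodesic is homologically minimizing.

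Actually, I suspect the cleanest route avoids Gauss--Bonnet entirely: since $M'$ is simply connected, the mod $2$ homology class $[\gamma']$ is $0$; the length infimum in the class $0$ is $0$ (attained by the empty $1$-cycle); therefore if $\gamma'$ is nonempty it has positive length and is not minimizing. So the lemma reduces to the topological fact $\pi_1(M')=0$, which holds because every bowl soliton is diffeomorphic to the plane (being a strictly convex entire graph). The only subtlety is whether ``minimizing mod $2$ in $M'$'' in the ambient paper's convention allows the empty cycle as a competitor in a given homology class; since the empty cycle represents $0$, it does, and the argument goes through. I would state this carefully, citing the structure of the bowl soliton (an entire convex graph, hence $\cong\RR^2$) as the one external input.

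The main obstacle I expect is purely expository: making sure the definition of ``homologically minimizing mod $2$'' is pinned down so that the empty competitor is admissible, and handling the possibility that $\gamma'$ is noncompact (so ``length'' is infinite) — in which case the statement is even easier, since an infinite-length cycle trivially fails to minimize against the empty cycle of length $0$. A secondary point worth a sentence is that a properly embedded geodesic in the bowl, if noncompact, is genuinely noncompact with infinite length because the bowl's intrinsic geometry is complete and non-compact with curvature decaying but integrable only over compact sets; but for the proof one only needs $\length(\gamma')>0$, which is immediate. Thus the real content is the triviality of $\pi_1$ of the bowl, and everything else is bookkeeping.
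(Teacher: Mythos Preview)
Your argument handles the closed case correctly and matches the paper's first step: a closed geodesic in the simply connected $M'$ bounds a compact disk, so the empty cycle is an admissible competitor and $\gamma'$ must be empty.

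The gap is in the non-compact case. The notion of ``homologically minimizing mod~$2$'' that survives passage to the limit $\gamma'$ (and that the paper uses; see Remark~\ref{geodesics-remark}) is the \emph{local} one: $\length(\gamma')\le \length(\gamma'+\partial Q)$ for every \emph{compactly supported} $2$-chain $Q$ in $M'$. When $\gamma'$ is non-compact, the empty cycle is \emph{not} an admissible competitor, because to pass from $\gamma'$ to $\emptyset$ one must add the boundary of a non-compact region. Simple connectivity alone therefore does not rule out such minimizers: a straight line in the Euclidean plane is homologically minimizing mod~$2$ in this sense, and the paper itself, in the cylinder case of Theorem~\ref{geodesics-theorem}, explicitly allows non-closed geodesics as possible limits after noting that their lifts to the (simply connected) universal cover are minimizing lines. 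So your sentence ``an infinite-length cycle trivially fails to minimize against the empty cycle of length~$0$'' rests on the wrong definition, and the purely topological route does not close.

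The paper's actual argument for the non-compact case uses the asymptotic geometry of the bowl. Because $M'$ is asymptotically cylindrical, $\length(M'\cap \partial B(0,R))$ stays bounded as $R\to\infty$, while the component $\gamma'_R$ of $\gamma'\cap B(0,R)$ through a fixed point has length growing at least like $2R$. The local minimizing property lets one swap $\gamma'_R$ for the shorter arc of $M'\cap \partial B(0,R)$, yielding $\length(\gamma'_R)\le \tfrac12\,\length(M'\cap \partial B(0,R))$; dividing by $R$ and sending $R\to\infty$ gives $2\le 0$, a contradiction. This step genuinely requires the bowl's asymptotics and cannot be replaced by the observation that $\pi_1(M')=0$.
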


\begin{proof}
By translating, we can assume
that $M'$ intersects its axis of symmetry at the origin.
Note that $\gamma'$ cannot be 
a closed curve, since $M'$ is simply connected.
Let $q$ be a point of $\gamma'$
closest to $0$.

For large $R$, let $\gamma'_R$
be the connected component of 
 $\gamma'\cap B(0,R)$
 that contains $q$.
Then 
\[
 \length(\gamma'_R)
 \le
 \frac12 \length(M'\cap \partial B(0,R)).
\]
Dividing by $R$ and then letting
$R\to\infty$ gives
\[
  2 \le 0,
\]
a contradiction. (The right hand side is $0$ because $M'$ is asymptotically cylindrical.)
\end{proof}

\begin{remark}\label{geodesics-remark}
Theorem~\ref{geodesics-theorem} remains true
if the  minimizing
hypothesis on $\gamma_i$
is be replaced by the weaker hypotheses: 
$\Gamma_i$ has finite length,
and 
there is a neighborhood $U$ of 
 $\Sing M(T)$
such that for all sufficiently large~$i$, 
\[
    \Gamma_i\cap U
\]
is homologically minimizing 
  mod~$2$ in $M(t_i)\cap U$.
That is, 
\[
 \length(\Gamma_i)
 \le
 \length(\Gamma_i + \partial Q)
\]
for every $2$-chain $Q$(mod $2$)
that is compactly supported
in $M(t_i)\cap U$.
No changes are required in the
proof.
\end{remark}

\section{Genus}
\label{genus-section}

Let $M$ be a surface, possibly incomplete and
possibly non-orientable.
Note that $\genus(M)$ is the supremum of numbers $n$
with the following property: there exist $1$-cycles $\alpha_1,\dots,\alpha_n$
and $\beta_1, \dots, \beta_n$ in $M$ such that the mod $2$ intersection number
$I(\alpha_i, \beta_i)$
of $\alpha_i$ and $\beta_j$ is
given by
\[
I(\alpha_i,\beta_j)
= \delta_{ij} := 
\begin{cases}
1 &\text{if $i=j$, and} \\
0 &\text{if $i\ne j$}.
\end{cases}
\]

\begin{theorem}
\label{main-theorem}
Suppose that $t\in [a,b]\mapsto M(t)$
is a $2$-dimensional standard Brakke flow
in a smooth Riemannian $3$-manifold~$N$. Suppose that $M(a)$
is compact and that the Ricci curvature of $N$ is bounded below.
Suppose also that each tangent flow to $M(\cdot)$ has multiplicity~one,
and that the corresponding
shrinker is a plane, a sphere,
or a cylinder.
Then
\[
  g(t):=\genus(\Reg M(t))
\]
is a decreasing function
on $[a,b]$.
\end{theorem}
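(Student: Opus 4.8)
The plan is to deduce the theorem from Theorem~\ref{general-theorem}. Its standing hypotheses hold automatically here: the flow is standard, and since planes, spheres, and cylinders are smooth, every tangent flow of $M(\cdot)$ is a smooth multiplicity-one shrinker. So the whole task is to verify Property~\eqref{property}: if $t_i>T$ are regular times converging to some $T\in[a,b]$ and $g(t_i)\ge n$ for all $i$ (with $n$ a positive integer), then $g(T)\ge n$.

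Fix such $T$ and such $t_i$. By Theorem~\ref{ricci-theorem} (applicable since $M(a)$ is compact), the supports of the $M(t)$, $t\in[a,b]$, lie in a fixed compact set; in particular each $M(t_i)$ is a smooth closed surface (recall $\Sing M(t_i)=\emptyset$) and $\Sing M(T)$ is compact. Since $g(t_i)=\genus(M(t_i))\ge n$, there are $1$-cycles $\alpha^i_1,\dots,\alpha^i_n,\beta^i_1,\dots,\beta^i_n$ in $M(t_i)$ with $I(\alpha^i_j,\beta^i_k)=\delta_{jk}$. Each of the classes $[\alpha^i_j],[\beta^i_j]$ is nonzero mod~$2$, since it has intersection number $1$ with another class; so I may replace each of these $2n$ cycles by one that minimizes length in its mod~$2$ homology class in $M(t_i)$, which changes no intersection number. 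Each replacement is a finite disjoint union of simple closed geodesics with multiplicity one, so Theorem~\ref{geodesics-theorem} applies to all $2n$ of them, giving an $\eps>0$ such that, for all large $i$, every one of the $2n$ cycles stays at distance $\ge\eps$ from $\Sing M(T)$.

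Now I bring in the smooth convergence $M(t)\to M(T)$ away from $\Sing M(T)$. Choose a compact smooth surface-with-boundary $K$ with $\{p\in M(T):\dist(p,\Sing M(T))\ge\eps/2\}\subset\interior K\subset K\subset\Reg M(T)$. Covering $K$ by finitely many spacetime neighborhoods on which the flow is a smooth one-parameter family of surfaces yields, for $t$ near $T$, a smoothly varying diffeomorphism $\phi_t$ of $K$ onto a compact subsurface of $M(t)$, with $\phi_T$ the inclusion; and because the spacetime support of the flow is closed, $M(t_i)\cap\{\dist(\cdot,\Sing M(T))\ge\eps\}\subset\phi_{t_i}(K)$ for all large $i$. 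Hence, for all large $i$, the $2n$ minimizing cycles lie in $\phi_{t_i}(K)$. Pulling them back by $\phi_{t_i}^{-1}$ gives $1$-cycles in $K$; since $\phi_{t_i}$ is a diffeomorphism onto its image, and since mod~$2$ intersection numbers are computed by counting transverse intersection points (all of which lie inside $\phi_{t_i}(K)$), these pulled-back cycles still realize the pattern $I(\cdot,\cdot)=\delta_{jk}$. Therefore $\genus(K)\ge n$, and so $g(T)=\genus(\Reg M(T))\ge\genus(K)\ge n$. This is Property~\eqref{property}, so Theorem~\ref{general-theorem} finishes the proof.

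The only substantive input is Theorem~\ref{geodesics-theorem}; granting it, the one place that needs care is the last paragraph, where one converts ``the minimizing cycles avoid a neighborhood of $\Sing M(T)$'' into a genus bound for $\Reg M(T)$ --- this rests on the smooth convergence of the flow on its regular part and on the routine fact that moving the cycles to the diffeomorphic subsurface $K$ preserves mod~$2$ intersection numbers. Alternatively, as sketched in the introduction, one can extract smooth subsequential limits of the cycles: pulled back to the fixed surface $K$ and restricted to a subsequence on which their classes in $H_1(K;\ZZ_2)$ stabilize, they minimize length for a $C^\infty$-convergent family of metrics on $K$, hence have uniformly bounded length, hence subconverge to geodesics in $\Reg M(T)$ carrying the same intersection pattern.
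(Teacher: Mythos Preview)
Your proof is correct and follows essentially the same route as the paper: reduce to Property~\eqref{property} via Theorem~\ref{general-theorem}, replace the $1$-cycles by mod~$2$ length-minimizers, and invoke Theorem~\ref{geodesics-theorem} to keep them away from $\Sing M(T)$. The only difference is in the final step: the paper extracts smooth subsequential limits of the minimizing geodesics (using the length bounds~\eqref{length-bounds}) to obtain cycles in $\Reg M(T)$, whereas you pull the cycles back through a diffeomorphism $\phi_{t_i}$ to a fixed compact $K\subset\Reg M(T)$; your variant is a legitimate shortcut that sidesteps the length estimate, and you even sketch the paper's limiting argument as your alternative.
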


\begin{proof}
Suppose that  $t_k>T$ are regular 
times converging to~$T\in I$,
and that
\[
   g(t_k)\ge n
\]
for some positive integer~$n$
and for all~$k$.
By Theorem~\ref{general-theorem},
it suffices to show 
 that~$g(T)\ge n$.

Since $g(t_k)\ge n$,
 there
are $1$-cycles $\alpha^k_i$
and $\beta^k_i$ ($1\le i\le n$)
such that
\[
   I(\alpha^k_i, \beta^k_j)
   = \delta_{ij}.
\]
We may choose each $\alpha_i^k$ and each $\beta_i^k$ to minimize
length in its mod $2$ homology class in~$M(t_k)$.

By Theorem~\ref{ricci-theorem},
the $M(t)$ all lie in a compact subset of $N$, and, therefore, so do
the $\alpha^k_i$ and $\beta^k_i$.

Note that if $U$ is a simply connected region in $M(t_k)$, then
\begin{equation}
\label{length-bounds}
\begin{aligned}
\length(\alpha^k_i\cap U)
 &\le 
\frac12 \length(\partial U), 
   \\
\length(\beta^k_i\cap U)
 &\le 
\frac12 \length(\partial U).
\end{aligned}
\end{equation}

By Theorem~\ref{geodesics-theorem}
and the 
length bounds~\eqref{length-bounds}, we can assume, after passing to a subsequence, that, for each $i\in \{1,2,\dots,n\}$,
$\alpha^k_i$ and $\beta^k_i$ converge smoothly as $k\to\infty$
to geodesic $1$-cycles $\alpha_i$ and $\beta_i$ in $\Reg M(T)$.
By smooth convergence, 
\[
I(\alpha_i,\beta_j)=\delta_{ij}
\]
for $1\le i,j\le n$.
Hence 
\[
  g(T)=\genus(\Reg M(T))\ge n.
\]
\end{proof}

\section{A Localized Version}
\label{local-section}

\begin{theorem}
\label{local-open}
Suppose $t\in [a,b]\mapsto M(t)$
is a standard Brakke
flow in a smooth
Riemannian $3$-manifold~$N$.

Suppose $W$ is an open subset
of~$N$ such that 
\begin{enumerate}
\item
\label{one}
$\partial W$ is smooth.
\item
\label{two}
$\partial W$ is disjoint
from $\Sing M(t)$ for 
 each $t\in I$.
\item
\label{three}
$\Reg M(t)$ is transverse
 to $\partial W$ for
 each $t\in I$.
\item
\label{four}
$\cup_{t\in I}(W\cap M(t))$
is contained in 
a compact subset of~$N$.
\item
\label{five}
Each tangent flow
to~$M(\cdot)$
at each spacetime point in $W\times I$
has multiplicity~one, and
 the corresponding
shrinker is a plane, a sphere,
or a cylinder.
\end{enumerate}
Then 
\begin{equation}
\label{W-conclusion}
 g(t):= \genus(W\cap \Reg M(t))
 \tag{*}
\end{equation}
is a decreasing function
 of $t\in I$.
\end{theorem}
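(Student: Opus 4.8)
The plan is to mimic the proof of Theorem~\ref{main-theorem}, but working inside $W$ and being careful about the boundary $\partial W$. Just as before, by the localized analogue of Theorem~\ref{general-theorem} (which applies because conditions \eqref{five} guarantees almost every time is regular in $W$, and $g$ is lower semicontinuous on $W$ by the same argument as Lemma~\ref{lsc-lemma} — the nearest-point projection stays inside $W$ for $t$ near $T$ since $\partial W$ is disjoint from $\Sing M(\cdot)$ and $M(\cdot)$ is transverse to $\partial W$), it suffices to show: if $t_k > T$ are regular times converging to $T \in I$ with $g(t_k) \ge n$, then $g(T) \ge n$.

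First I would record the key point that makes the localization work: by condition~\eqref{two}, $\Sing M(T)$ is a compact subset of the open set $W$, hence is bounded away from $\partial W$; and by condition~\eqref{three} together with transversality being an open condition and \eqref{four}, for $k$ large the surfaces $M(t_k)$ meet $\partial W$ transversely in a smooth family of curves close to $M(T) \cap \partial W$. So $W \cap M(t_k)$ is a compact surface with boundary on $\partial W$. Since $g(t_k) \ge n$, there are $1$-cycles $\alpha_i^k, \beta_i^k$ in $W \cap \Reg M(t_k)$ with $I(\alpha_i^k, \beta_j^k) = \delta_{ij}$. Now I want to choose these cycles to be length-minimizing — but one must minimize length \emph{relative to the boundary}, i.e., among $1$-cycles in $W \cap M(t_k)$ in the given mod~$2$ homology class, allowing the competitors to be pushed off; equivalently, minimize in the class inside a slightly larger compact piece $W' \cap M(t_k)$ where $\overline{W} \subset W'$ and $W'$ still satisfies the analogues of \eqref{one}–\eqref{five}. (Such $W'$ exists by \eqref{one} and the openness of conditions \eqref{two}, \eqref{three}, \eqref{five}.) After this replacement the minimizers are closed geodesics with multiplicity one, their total length is bounded by \eqref{four} and the length bounds~\eqref{length-bounds} applied in simply connected regions, and they are homologically minimizing mod~$2$ in a neighborhood of $\Sing M(T)$ inside $M(t_k)$.

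The crucial step is then to invoke Theorem~\ref{geodesics-theorem}, or rather Remark~\ref{geodesics-remark}, to conclude that the $\alpha_i^k$ and $\beta_i^k$ are bounded away from $\Sing M(T)$ as $k \to \infty$. Remark~\ref{geodesics-remark} applies precisely because what we need is the \emph{local} minimizing property near $\Sing M(T)$, not global minimizing on all of $M(t_k)$: we have a neighborhood $U$ of $\Sing M(T)$, contained in $W$, on which the cycles minimize length mod~$2$. Given this boundedness-away, combined with the uniform length bound and the fact that $\Reg M(t_k) \to \Reg M(T)$ smoothly away from $\Sing M(T)$ (and that everything stays in the fixed compact set from \eqref{four}), a subsequence of each $\alpha_i^k, \beta_i^k$ converges smoothly to geodesic $1$-cycles $\alpha_i, \beta_i$ in $W \cap \Reg M(T)$. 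Smooth convergence preserves the mod~$2$ intersection numbers, so $I(\alpha_i, \beta_j) = \delta_{ij}$, whence $g(T) = \genus(W \cap \Reg M(T)) \ge n$.

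The main obstacle I anticipate is bookkeeping around the boundary: verifying that the relative length-minimizing cycles (minimizing in $W' \cap M(t_k)$) actually live in $W$ and not in the collar $W' \setminus \overline{W}$, and that the mod~$2$ intersection numbers are genuinely preserved under the passage to $W'$ and back — one needs that a homology class in $W \cap M(t_k)$ that is nontrivial there remains detectable after enlarging to $W'$, which follows since the intersection pairing $I(\alpha_i^k, \beta_j^k) = \delta_{ij}$ is computed using cycles that we keep fixed as test objects. A subtlety worth spelling out is why the limit cycles are nonempty and don't degenerate: this is exactly ruled out by Corollary~\ref{sc-corollary} (a short simple closed curve near a cylindrical singularity bounds a disk), which is already built into the conclusion of Theorem~\ref{geodesics-theorem}. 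Apart from that, the argument is a faithful localization of the proof of Theorem~\ref{main-theorem}.
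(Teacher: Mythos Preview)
Your overall strategy matches the paper's: reduce via Theorem~\ref{general-theorem} to the key step, pick cycles witnessing genus~$\ge n$, minimize their lengths, invoke Remark~\ref{geodesics-remark} to keep them away from $\Sing M(T)$, and pass to the limit. The difference---and the gap---is in how you handle the boundary.

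Your enlargement to $W'$ is not just bookkeeping; it is a real problem you identify but do not resolve. Once you minimize in $W'\cap M(t_k)$, the minimizers may lie in the collar $W'\setminus\overline{W}$, and so may their limits. You would then only conclude $\genus(W'\cap\Reg M(T))\ge n$, which does not imply $\genus(W\cap\Reg M(T))\ge n$: genus is monotone under inclusion of surfaces, but in the wrong direction for what you need here. Your observation that the homology classes remain nontrivial in $W'$ (because the intersection pairing with the fixed $\beta_j^k$ survives) is correct, but it does not pin the minimizer to $W$. Separately, the claim that a $W'$ satisfying analogues of \eqref{one}--\eqref{five} for \emph{every} $t\in I$ exists is not obvious, since transversality at all times is not an open condition in the choice of domain.

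The paper avoids all of this by minimizing directly in $\overline{W}\cap M(t_k)$. The minimizers are then $C^{1,1}$ curves that are smooth geodesics away from $\partial W$; they remain in $\overline{W}$ by construction; Remark~\ref{geodesics-remark} still applies (the local minimizing property holds in any neighborhood of $\Sing M(T)$ contained in $W$); and the convergence to limit cycles in $\overline{W}\cap\Reg M(T)$ is $C^1$, which is enough to preserve mod~$2$ intersection numbers. One then uses the elementary identity $\genus(W\cap\Reg M(t))=\genus(\overline{W}\cap\Reg M(t))$, which follows from transversality, to finish.
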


\begin{remark}
By Theorem~\ref{ricci-theorem},
Hypothesis~[4] is satisfied if
 $M(a)$ is compact
and if $N$ is a complete manifold with Ricci curvature bounded below.
\end{remark}

\begin{proof}
Note that
\begin{equation}
\label{equal}
\genus(W\cap \Reg M(t))
=
\genus(\overline{W}\cap \Reg M(t))
\end{equation}
for all $t\in [a,b]$.
Consider the Brakke flow
\[
 t\in [a,b] \mapsto \tilde M(t)
  = M(t)\cap W.
\]
Suppose $t_k>T$ are regular times of $\tilde M(\cdot)$
converging to $T\in I$,
and that
\[
   g(t_k)\ge n
\]
for all $k$ (where $n$ is a positive integer).
By Theorem~\ref{general-theorem}, it suffices to show
that $g(T)\ge n$.

Since $g(t_k)\ge n$, there
exist $1$-cycles $\alpha^k_i$
and each $\beta^k_i$
 ($1\le i\le n$) in 
 $M(t_i)\cap \overline{W}$
such that
\[
  I(\alpha^k_i, \beta^k_j)
  =
  \delta_{ij}.
\]
We may choose each $\alpha^k_i$ and $\beta^k_j$
to minimize 
length in its mod~$2$ homology
class.
Note that each of the cycles
is a disjoint union of 
simple closed
 curves.  The curves are 
  $C^{1,1}$,  and they are smooth geodesics away from $\partial W$.

By Theorem~\ref{geodesics-theorem} and
Remark~\ref{geodesics-remark}, 
the $\alpha^k_i$ and 
  $\beta^k_i$
are bounded away from 
  $\sing M(T)$
  as $k\to\infty$.

Thus, as in the proof of 
 Theorem~\ref{main-theorem},  after passing to a  subsequence, the
 $\alpha^k_i$ and
 $\beta^k_i$ converge 
 (as $k\to\infty$)
to cycles $\alpha_i$ and $\beta_i$
in $\overline{W}\cap \Reg M(T)$.
Note that the convergence is
in $C^1$ 
(and, indeed, is smooth away from $\partial W$.)
Hence
\[
  I(\alpha_i, \beta_j)
  =
  \delta_{ij}
\]
for $1\le i,j\le n$.
Therefore, $\overline{W}\cap \Reg M(T)$ has genus $\ge n$.
Hence $g(T)\ge n$ 
 by~\eqref{equal}.
\end{proof}

\begin{remark}
\label{local-open-remark}
More generally, one can allow open sets that depend on time.  Let $t\in I\mapsto W(t)$ be 
a continuous, one-parameter family of open
sets such that $\partial W(t)$ is a smooth hypersurface that depends smoothly on~$t$.
Then Theorem~\ref{local-open} remains true if we replace $W$ by $W(t)$  in~\eqref{one}--\eqref{four}
and~\eqref{W-conclusion}, 
and if we replace
 $W\times I$
by $\{(p,t): t\in I,\, p\in W(t)\}$ in~\eqref{five}.
The proof is exactly the same.
\end{remark}

\section{Another Localized Version}
\label{another-local-section}

\begin{theorem}
\label{local-curves}
Suppose that $t\in [a,b]\mapsto M(t)$
is a two-dimensional, 
standard Brakke flow
in a complete, smooth Riemannian
$3$-manifold with Ricci curvature
bounded below.
Suppose that $M(a)$ is compact. 
Suppose that each tangent flow to $M(\cdot)$ has multiplicity~one,
and that the corresponding
shrinker is a plane, a sphere,
or a cylinder.
Suppose that $\Gamma$ is a finite, disjoint union of circles,
and that 
\[
  f: I\times \Gamma \to N
\]
is a smooth map such that
 for each $t\in I$, $f(t,\cdot)$ is a smooth immersion
 of $\Gamma$ into $\Reg M(t)$ with only transverse intersections.
For $t\in I$, let 
 $K_t:=f(t,\Gamma)$, and suppose that the $K_t$'s are
 all homeomorphic.
Then
\[
 g(t):=\genus(\Reg M(t)\setminus K_t)
\]
is a decreasing function 
of $t\in [a,b]$.
\end{theorem}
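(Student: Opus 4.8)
The plan is to reduce everything, exactly as in the proof of Theorem~\ref{general-theorem}, to two statements: (i) $g(t):=\genus(\Reg M(t)\setminus K_t)$ is lower semicontinuous; and (ii) whenever $t_k>T$ are regular times converging to $T$ with $g(t_k)\ge n$ for all $k$ (here $n$ a positive integer), one has $g(T)\ge n$. Since the shrinker hypothesis forces almost every time to be regular (cf.~\cite{white-stratification}), statements (i) and (ii) together with Lemma~\ref{general-lemma} give the conclusion. The curves $K_t$ are accommodated by minimizing length in the complement of a fixed small tubular neighborhood of $K_t$ rather than in $\Reg M(t)$ itself.

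For (i), I would run the proof of Lemma~\ref{lsc-lemma} with one added observation. If $g(T)\ge k$, choose a compact subregion $\Sigma\subset \Reg M(T)\setminus K_T$ with smooth boundary and $\genus(\Sigma)\ge k$; since $\Sigma$ and $K_T=f(T,\Gamma)$ are disjoint compact sets, $\dist(\Sigma,K_T)>0$. For $t$ near $T$ the nearest-point projection $\phi(t,\cdot)$ carries $\Sigma$ to a diffeomorphic copy $\Sigma(t)\subset\Reg M(t)$ that is $C^0$-close to $\Sigma$, while $K_t$ is Hausdorff-close to $K_T$ (continuity of $f$, compactness of $\Gamma$); hence $\Sigma(t)$ is disjoint from $K_t$, so $g(t)\ge\genus(\Sigma(t))=k$, and therefore $\liminf_{t\to T}g(t)\ge g(T)$.

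For (ii): since $M(t_k)$ is smooth and $g(t_k)\ge n$, there are $1$-cycles $\alpha^k_j,\beta^k_j$ ($1\le j\le n$) in $M(t_k)\setminus K_{t_k}$ with $I(\alpha^k_j,\beta^k_l)=\delta_{jl}$. Because $f$ is a smooth family of immersions of the compact $\Gamma$ with transverse self-intersections and the $K_t$ are mutually homeomorphic, there is a $\delta_0>0$ such that, for $t=t_k$ or $t=T$, the $\delta_0$-neighborhood $N(K_t)$ of $K_t$ in $M(t)$ is a smooth regular neighborhood (rounding the corners over the self-intersection points) onto which $M(t)\setminus K_t$ deformation retracts. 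Homotoping the cycles into $M(t_k)\setminus N(K_{t_k})$ (which preserves homology classes, hence mod~$2$ intersection numbers) and then replacing each by a length-minimizer in its mod~$2$ homology class in the compact surface-with-boundary $M(t_k)\setminus N(K_{t_k})$ produces $1$-cycles — finite disjoint unions of $C^{1,1}$ simple closed curves, geodesic in the interior — with the same intersection numbers. Now pick a neighborhood $U$ of $\Sing M(T)$ disjoint from $K_T$ (possible since both are compact and disjoint); for large $k$, $U$ is disjoint from $N(K_{t_k})$, so $U\cap(M(t_k)\setminus N(K_{t_k}))=U\cap M(t_k)$ and the restriction of each minimizer to $U$ is homologically minimizing mod~$2$ in $M(t_k)\cap U$. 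By Theorem~\ref{geodesics-theorem} and Remark~\ref{geodesics-remark}, the minimizers are bounded away from $\Sing M(T)$ as $k\to\infty$. Away from $\Sing M(T)$ the surfaces $M(t_k)$ and sets $N(K_{t_k})$ converge smoothly to $M(T)$ and $N(K_T)$, and the standard length estimates (as in~\eqref{length-bounds}, together with the uniform bound on $\length(\partial N(K_{t_k}))$) bound the lengths of the minimizers uniformly; so, after passing to a subsequence, $\alpha^k_j$ and $\beta^k_j$ converge in $C^1$ to $1$-cycles $\alpha_j,\beta_j$ in $M(T)\setminus N(K_T)$ that are bounded away from $\Sing M(T)$, hence lie in $\Reg M(T)\setminus K_T$. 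Mod~$2$ intersection numbers pass to the limit (as in the proof of Theorem~\ref{main-theorem}), so $I(\alpha_j,\beta_l)=\delta_{jl}$ and therefore $g(T)=\genus(\Reg M(T)\setminus K_T)\ge n$.

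I expect the main obstacle to be the minimization step in (ii): one must arrange matters so that the key input, Theorem~\ref{geodesics-theorem}, still applies while simultaneously forcing the limiting cycles off $K_T$. This is what motivates the detour through the thickened complement $M(t)\setminus N(K_t)$. Minimizing directly in the open surface $M(t)\setminus K_t$ need not yield an attained minimizer, since a minimizing sequence could collapse onto $K_t$; in the compact surface-with-boundary $M(t)\setminus N(K_t)$ minimizers exist, enjoy $C^{1,1}$ regularity, and — crucially — restrict near $\Sing M(T)$ to locally homologically minimizing cycles, which is precisely the hypothesis of Remark~\ref{geodesics-remark}. Moreover, because $K_T$ is disjoint from $\Sing M(T)$ and $M(t_k)\to M(T)$ smoothly near $K_T$, the limit cycles automatically avoid $N(K_T)\supset K_T$. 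Verifying that this thickening changes neither the genus count nor the homological bookkeeping, and that all the estimates are uniform as $k\to\infty$, is the bulk of the work; everything else is a rerun of the proof of Theorem~\ref{main-theorem}.
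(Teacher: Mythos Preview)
Your argument is correct and follows the same overall skeleton as the paper: reduce to the framework of Theorem~\ref{general-theorem} (lower semicontinuity plus Property~(*)), then verify Property~(*) by producing length-minimizing $1$-cycles, invoking Theorem~\ref{geodesics-theorem}/Remark~\ref{geodesics-remark} to keep them away from $\Sing M(T)$, and passing to the limit. The one substantive tactical difference is how the curves $K_t$ are handled. You excise a uniform tubular neighborhood $N(K_t)$ and minimize in the smooth compact surface-with-boundary $M(t_k)\setminus N(K_{t_k})$; the paper instead minimizes in the geodesic completion $\tilde M(t_k)$ of $M(t_k)\setminus K_{t_k}$, which is a surface whose boundary has corners over the self-intersection points of $K_{t_k}$. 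Your route avoids corners entirely, at the price of choosing and tracking an auxiliary $\delta_0$ and checking that the regular-neighborhood structure is uniform in $k$; the paper's route is more canonical (no auxiliary choice), but must then argue that the $C^1$ limit cycles cannot pass through the corner set $C\subset\partial\tilde M(T)$, and that $\tilde M(t_k)\to\tilde M(T)$ smoothly away from $C\cup\Sing M(T)$. Both land in the same place and use the same key inputs. (One wording slip: you write that $M(t)\setminus K_t$ deformation retracts onto $N(K_t)$; you mean onto $M(t)\setminus N(K_t)$.)
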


\begin{proof}
For $t\in [a,b]$,
let $\Sigma(t)$ be the geodesic
completion of 
  $M(t)\setminus K_t$,
and let 
\[
  \tilde M(t)
  :=
  \Sigma(t) \setminus 
  \Sing M(t).
\]
Note that
\begin{equation}
\label{tilde}
  g(t) = \genus(\tilde M(t)).
\end{equation}
Suppose that $t_k>T$ are regular times converging to $T$ and that 
\[
g(t_k)\ge n
\]
for all $k$ (where $n$ is a positive integer).  
By Theorem~\ref{general-theorem}, 
it suffices to prove 
that $g(T)\ge n$.

By~\eqref{tilde}
and by the assumption 
that $g(t_k)\ge n$,
there are $1$-cycles 
 $\alpha^k_i$
 and
 $\beta^k_i$
 ($1\le i \le n$)
 in $\tilde M(t_k)$
 such that
 \[
 I(\alpha^k_k,\beta^k_i)
 =
 \delta_{ij}.
 \]
 We can choose each
 $\alpha^k_i$ and each $\beta^k_i$
 to minimize length
 in its mod $2$ homology class in $\tilde M(t_k)$.
 Thus each such $1$-cycle
 is a finite union of disjoint, $C^{1,1}$ curves,
 and the curves are geodesics
 away from 
    $\partial \tilde M(t_k)$.

For large $k$, the $1$-cycles
are uniformly bounded away
from $\Sing M(T)$ by 
 Theorem~\ref{geodesics-theorem} and
 Remark~\ref{geodesics-remark}.
By Theorem~\ref{ricci-theorem},
the $1$-cycles are all contained
in compact subset of $N$.

 Note also that the geodesic
 curvatures of the $\alpha^k_i$ and
  $\beta^k_i$ 
 are uniformly bounded
 (since the curvatures of the 
 curves in $K_t$ are uniformly bounded.)

 Thus, after passing
 to a subsequence, these $1$-cycles converge in $C^1$
 (indeed, in $C^{1,\eta}$ for 
 every $\eta\in (0,1)$)
 to limit cycles 
  $\alpha_i$ and $\beta_i$
 that are compactly
 supported in
 $\tilde M(T)\setminus\Sing M(T)$.
 
 Let $C$ be the set of corner points of $\partial \tilde M(T)$, i.e., the set
 of points in $\partial \tilde M(T)$ corresponding
 to the self-intersections 
 of $K_T$.
 Since the $\alpha_i$ and $\beta_i$ are $C^1$, they do not contain any points of $C$.
 Note that the $\tilde M(t_i)$
 converge smoothly to 
   $\tilde M(t_i)$
 away from $C\cup \Sing M(T)$.
 Thus
 \[
   I(\alpha_i, \beta_j)=\delta_{ij}
 \]
for $1\le i, j \le n$.
 Hence $g(T)\ge n$
 by~\eqref{tilde}.
\end{proof}

\begin{corollary}
Suppose that $t\in [a,b]\mapsto M(t)$
is a $2$-dimensional 
standard Brakke flow in a
complete, smooth Riemannian $3$-manifold~$N$
with Ricci curvature bounded below.
Suppose $M(a)$ is compact.
  Suppose that all
tangent flows have multiplicity one and are given by planes, spheres, or cylinders.
Suppose that also that $K$ is the union of a finite collection of smooth, simple closed curves that intersect each other transversely, and that $K$
is contained in $\Reg M(t)$
for each $t\in [a,b]$.
Then
\[
 g(t):=
 \genus(\Reg M(t)\setminus K)
\]
is a decreasing function of $t\in [a,b]$.
\end{corollary}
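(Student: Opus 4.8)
The plan is to deduce this corollary directly from Theorem~\ref{local-curves} by exhibiting a suitable time-independent parametrization. Since $K$ is a fixed finite union of smooth, simple closed curves meeting transversely, and $K\subset\Reg M(t)$ for every $t\in[a,b]$, I would set $\Gamma$ to be an abstract disjoint union of circles, one for each of the simple closed curves comprising $K$, and let $f\colon I\times\Gamma\to N$ be the map that, on each circle, is the constant-in-$t$ parametrization of the corresponding closed curve. The hypotheses of Theorem~\ref{local-curves} then need to be checked: $f(t,\cdot)$ is a smooth immersion of $\Gamma$ into $\Reg M(t)$ because each component curve is a smooth simple closed curve lying in $\Reg M(t)$; the intersections are transverse because the curves of $K$ meet transversely \emph{and} (crucially) transversely \emph{within the surface} $M(t)$, which I should note is what ``intersect each other transversely'' must mean here when $K\subset M(t)$; and $K_t=f(t,\Gamma)=K$ is literally constant, hence all the $K_t$ are (trivially) homeomorphic.

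The remaining hypotheses of Theorem~\ref{local-curves} are inherited verbatim from the corollary's hypotheses: $t\mapsto M(t)$ is a standard Brakke flow, $M(a)$ is compact, $N$ is complete with Ricci curvature bounded below, and every tangent flow has multiplicity one with shrinker a plane, sphere, or cylinder. With all hypotheses verified, Theorem~\ref{local-curves} yields that
\[
  g(t)=\genus(\Reg M(t)\setminus K_t)=\genus(\Reg M(t)\setminus K)
\]
is a decreasing function of $t\in[a,b]$, which is exactly the assertion.

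The only point requiring genuine care — and the main (mild) obstacle — is the interpretation of transversality. In Theorem~\ref{local-curves} the hypothesis is that $f(t,\cdot)$ is an immersion of $\Gamma$ into $\Reg M(t)$ ``with only transverse intersections,'' meaning the self-intersections of the \emph{curve system inside the surface} are transverse. The corollary instead says the curves of $K$ ``intersect each other transversely.'' For a collection of curves all lying in the common surface $M(t)$, transversality as curves in $N$ would force them to be disjoint (two curves in a $3$-manifold generically do not meet), so the intended meaning is transversality within each $M(t)$. I would simply state this reading explicitly: two distinct component curves $c,c'$ of $K$ meet at a point $p$ with $T_pc + T_pc' = T_pM(t)$, and no three of them pass through a common point; this is precisely the ``only transverse intersections'' condition needed. (If one instead reads ``transversely'' in the ambient sense, then $K$ is embedded, $K_t$ is even simpler, and the conclusion is an immediate special case.) Once this is pinned down, the corollary is an entirely routine specialization of Theorem~\ref{local-curves}, with no further analysis required.
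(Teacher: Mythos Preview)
Your proposal is correct and matches the paper's intended approach: the paper states this corollary immediately after Theorem~\ref{local-curves} with no separate proof, treating it as the obvious specialization in which $f(t,\cdot)$ is the constant-in-$t$ parametrization of the fixed set $K$, so that $K_t\equiv K$ are trivially homeomorphic. Your discussion of the transversality reading is more careful than the paper bothers to be, but it is accurate and does not deviate from the intended argument.
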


\section{Appendix}

Let  $d_1$, and $d_2$
be the entropy of the cylinder and sphere, respectively.
Recall that 
\[
  2 > d_1 > d_2 > 1, 
\]
and that there is a $\delta>d_1$ such that 
 the plane, the sphere, and the cylinder are the only smooth shrinkers in $\RR^3$ with entropy $<\delta$
   \cite{bw}*{Corollary~1.2}.
By replacing $\delta$ by a smaller number $> d_1$, we can assume that $\delta< 2$.

\begin{theorem}
\label{appendix-theorem}
Let $M$ be a compact, smoothly embedded surface in a complete, smooth Riemannian $3$-manifold with Ricci curvature bounded below.
Let 
\[
  t\in [0,\infty)\mapsto \mu(t)
\]
be a unit-regular, cyclic Brakke flow with 
\[
  \mu(0)=\Hh^2\llcorner M.
\]
(Such a flow exists by elliptic regularization.)
Let $T^*$ be the infimum of times $t>0$ at which there is a point at which the Gauss density is $\ge \delta$, i.e.,
at which a shrinker has entropy $\ge \delta$.
Then
\begin{enumerate}
\item
\label{app1}
If $T^*<\infty$, then there is a spacetime point $(p,T^*)$ at which a shrinker
has entropy $\ge \delta$.
\item
\label{app2}
Each shrinker at a time $<T^*$ is a plane, a sphere, or a cylinder, and has multiplicity $1$.
\item 
\label{app3}
$\mu(t)=\Hh^2(t)\llcorner M(t)$
  for $t<\Tpos$.
\item
\label{app4}
The flow
\[
   t\in [0,\Tpos)\mapsto \mu(t)
\]
is standard.
\end{enumerate}
\end{theorem}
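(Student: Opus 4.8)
The plan is to establish the four assertions in order; the last two then follow almost formally. Throughout, write $\Theta(p,t)$ for the Gauss density of $\mu(\cdot)$ at $(p,t)$, and recall that $(p,t)\mapsto\Theta(p,t)$ is upper semicontinuous and that the Gauss density at a point equals the entropy of any tangent flow there (a tangent flow is a self-shrinking Brakke flow $t\mapsto\sqrt{-t}\,\Sigma$, and for a self-shrinker $\Sigma$ the entropy $\lambda(\Sigma)$ is attained at the natural centering, where it equals the Gauss density of the original flow at the singular point). For the first assertion, suppose $T^*<\infty$. By the definition of $T^*$ there are times $t_k\to T^*$ with $t_k\ge T^*$ and points $p_k$ with $\Theta(p_k,t_k)\ge\delta$; by Theorem~\ref{ricci-theorem} the supports of $\mu(t)$ for $t\in[0,T^*+1]$ lie in a fixed compact set, so after passing to a subsequence $p_k\to p$, and upper semicontinuity of $\Theta$ gives $\Theta(p,T^*)\ge\delta$. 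Hence the tangent flow at $(p,T^*)$ is a shrinker of entropy $\ge\delta$.

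For the second assertion, fix a time $t<T^*$. Since $t$ is less than the infimum of the times at which a point of Gauss density $\ge\delta$ occurs, every spacetime point $(q,t)$ has $\Theta(q,t)<\delta<2$, so every tangent flow at such a point is a self-shrinker $\Sigma$ of entropy $<\delta$. Since the entropy of a varifold with a point of multiplicity $\ge2$ is at least $2$, the bound $\lambda(\Sigma)<2$ forces $\Sigma$ to have multiplicity one. To see that $\Sigma$ is smooth, suppose it had a singular point $y$; blowing up there produces a multiplicity-one, stationary $2$-dimensional cone $C$ in $\RR^3$ that is again cyclic (no blow-up of it being an odd number of half-planes). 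Either $C$ is a plane --- and then $\Sigma$ is smooth at $y$ by Allard's regularity theorem, contradicting the choice of $y$ --- or $C$ is non-planar; but the only non-planar stationary $2$-dimensional cones in $\RR^3$ of density $<2$ are the triple-junction cones, which are not cyclic, so $C$ has vertex density $\Theta(\Sigma,y)\ge2$ and hence $\lambda(\Sigma)\ge2$, again a contradiction. Thus $\Sigma$ is a smooth, multiplicity-one self-shrinker of entropy $<\delta$, so by \cite{bw}*{Corollary~1.2} it is a plane, a sphere, or a cylinder.

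Being produced by elliptic regularization, $\mu(\cdot)$ is an integral Brakke flow, and it is also unit-regular and cyclic, hence standard on all of $[0,\infty)$; this is the fourth assertion. The first two assertions identify $\Tpos$ with $T^*$: by the second, all shrinkers at times $<T^*$ are multiplicity-one planes, spheres, or cylinders, so $\Tpos\ge T^*$; and by the first, if $T^*<\infty$ there is a shrinker of entropy $\ge\delta>d_1$ at time $T^*$ --- which is none of those three --- so $\Tpos\le T^*$. For the third assertion, note that for $t<\Tpos=T^*$ the flow is a smooth multiplicity-one surface near $\Hh^2$-almost every point of $\spt\mu(t)$ (every tangent flow being a multiplicity-one plane, sphere, or cylinder), so the integral varifold $\mu(t)$ equals $\Hh^2\llcorner M(t)$ with $M(t)=\spt\mu(t)$.

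I expect the main obstacle to be the smoothness step in the second assertion --- equivalently, the assertion that a tangent flow of entropy $<2$ to a standard mean curvature flow of surfaces in a $3$-manifold is smooth with multiplicity one. This rests on the classification of low-density $2$-dimensional stationary cones in $\RR^3$ (the role of cyclicity being to exclude the triple-junction cones, which alone among the non-planar ones have density below $2$), combined with a standard dimension-reduction argument; alternatively one may invoke existing regularity theory for low-entropy or cyclic Brakke flows. The remaining steps are soft: an upper-semicontinuity and compactness argument for the first assertion, and bookkeeping with the definition of $\Tpos$ for the last two.
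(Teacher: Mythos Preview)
Your arguments for assertions~(1) and~(2) are correct and essentially match the paper's: for~(2) you have unpacked the content of \cite{white-boundary-sing}*{Lemma~20}, which the paper simply cites, and your cone-classification sketch is the right idea (cyclicity rules out the triple junction, the only non-planar stationary $2$-cone in~$\RR^3$ of density below~$2$).

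The gap is in assertion~(3). In this paper $M(t)$ denotes the \emph{level set flow} of the initial surface~$M$ (see Theorem~\ref{intro-theorem}), not $\spt\mu(t)$. Thus assertion~(3) is a genuine uniqueness statement: the elliptic-regularization Brakke flow~$\mu$ coincides with the level set flow on~$[0,\Tpos)$. Your argument only establishes the much weaker fact that the integral varifold $\mu(t)$ has unit multiplicity $\Hh^2$-a.e., i.e., $\mu(t)=\Hh^2\llcorner\spt\mu(t)$; it does not identify $\spt\mu(t)$ with the level set~$M(t)$. The paper obtains that identification from the uniqueness theorem in~\cite{chhw}: a unit-regular Brakke flow starting from a smooth closed surface and having only multiplicity-one spherical and cylindrical tangent flows is uniquely determined by its initial data, hence agrees with the level set flow.

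This gap also makes your identification $\Tpos=T^*$ circular. Since $\Tpos$ is defined in terms of the level set flow, concluding $\Tpos\ge T^*$ from assertion~(2) --- which is a statement about the tangent flows of~$\mu$, not of $t\mapsto\Hh^2\llcorner M(t)$ --- already requires knowing that~$\mu$ agrees with the level set flow on~$[0,T^*)$, which is precisely the substantive content of~(3). In the paper, the equality $\Tpos=T^*$ is recorded separately as Corollary~\ref{T*-Tpos-corollary}, deduced only after~(3) has been established via~\cite{chhw}.
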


\begin{proof}
Assertion~\eqref{app1} follows trivially from the upper semicontinuity of Gauss density.

For any $2$-dimensional cyclic flow in a $3$-manifold, any shrinker at a spacetime point of Gauss density $<2$ is smoothly embedded and has multiplicity one. 
(See~\cite{white-boundary-sing}*{Lemma~20}.) At times $<T^*$, the Gauss density is 
  $\le \delta$, the shrinker 
  must be a plane, sphere,
  or cylinder and must have multiplicity one.
  Thus Assertion~\eqref{app2}
holds.

Assertion~\eqref{app3} follows from the uniqueness theorem in~\cite{chhw}.

Assertion~\eqref{app4} holds 
by Assertion~\eqref{app3} and the assumption that $\mu(\cdot)$ is a standard Brakke flow.
\end{proof}

\begin{corollary}
\label{T*-Tpos-corollary}
The $T^*$ in Theorem 21 is equal to the $\Tpos$ in Theorem~\ref{intro-theorem}.
\end{corollary}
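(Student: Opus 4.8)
The plan is to prove the two inequalities $T^*\le\Tpos$ and $\Tpos\le T^*$ separately, drawing only on Theorem~\ref{appendix-theorem}, the uniqueness theorem of~\cite{chhw}, and the facts recorded just before Theorem~\ref{appendix-theorem}: the plane, the sphere, and the cylinder have entropies $1$, $d_2$, $d_1$, all strictly less than $\delta$, while conversely any shrinker of entropy $<\delta<2$ is --- by cyclicity together with \cite{white-boundary-sing}*{Lemma~20} and the classification \cite{bw}*{Corollary~1.2} --- a multiplicity-one plane, sphere, or cylinder. I will use throughout that the entropy of a shrinker at a spacetime point $(q,\tau)$ equals the Gauss density $\Theta(q,\tau)$ there, and that $\Theta(q,\tau)$ is determined, via Huisken's monotonicity formula, by the flow at times strictly before $\tau$.

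First I would prove $T^*\le\Tpos$. Assume $T^*>0$, since otherwise there is nothing to prove. By Assertion~\eqref{app2} of Theorem~\ref{appendix-theorem}, every shrinker of $\mu(\cdot)$ at a time $<T^*$ is a multiplicity-one plane, sphere, or cylinder; in particular the hypotheses of the \cite{chhw} uniqueness theorem hold on the interval $[0,T^*)$, so $\mu(t)=\Hh^2\llcorner M(t)$ for $t\in[0,T^*)$, where $M(\cdot)$ denotes the level set flow of $M$. Since $\mu(\cdot)$ is unit-regular and cyclic, this restriction is a standard Brakke flow, and all of its shrinkers are multiplicity-one planes, spheres, or cylinders. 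Hence $[0,T^*)$ witnesses both conditions in the definition of $\Tpos$, which gives $\Tpos\ge T^*$.

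Next I would prove $\Tpos\le T^*$, arguing by contradiction. Suppose $T^*<\Tpos$; then $T^*<\infty$, so by Assertion~\eqref{app1} of Theorem~\ref{appendix-theorem} there is a spacetime point $(p,T^*)$ at which a shrinker of $\mu(\cdot)$ has entropy $\ge\delta$, i.e.\ $\Theta_\mu(p,T^*)\ge\delta$. By the previous step $\mu$ and $\Hh^2\llcorner M(\cdot)$ agree on $[0,T^*)$, so the Gauss density of $\Hh^2\llcorner M(\cdot)$ at $(p,T^*)$ is also $\ge\delta$. But $T^*$ is a time $<\Tpos$, so by the definition of $\Tpos$ every shrinker of $\Hh^2\llcorner M(\cdot)$ at $(p,T^*)$ is a multiplicity-one plane, sphere, or cylinder, and therefore has entropy $\le d_1<\delta$; since this entropy is the Gauss density at $(p,T^*)$, we reach a contradiction. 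Combining the two inequalities yields $T^*=\Tpos$.

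The principal obstacle here is bookkeeping rather than analysis: $T^*$ is defined through the elliptic-regularization flow $\mu(\cdot)$, whereas $\Tpos$ is defined through the level set flow $M(\cdot)$, so everything hinges on invoking the identification $\mu=\Hh^2\llcorner M(\cdot)$ exactly on the interval $[0,T^*)$ where it is available --- this is where Assertion~\eqref{app2} and the \cite{chhw} uniqueness theorem enter --- and on the observation that agreement on $[0,T^*)$ already pins down both flows' Gauss densities at the endpoint $T^*$, since those densities see only earlier times. No further analytic estimates are needed beyond what is packaged in Theorem~\ref{appendix-theorem}.
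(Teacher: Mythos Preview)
Your proof is correct and is exactly the argument the paper has in mind: the corollary is stated without proof because it follows immediately from Assertions~\eqref{app1}--\eqref{app4} of Theorem~\ref{appendix-theorem} together with the entropy values recorded before that theorem, via the two inequalities you spell out. Your observation that the Gauss density at $(p,T^*)$ is determined by the flow on $[0,T^*)$, where $\mu$ and $\Hh^2\llcorner M(\cdot)$ are already known to agree, is the one point that needs care, and you handle it correctly.
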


So far in this paper, we have not invoked
the powerful recent results of Bamler and Kleiner.
That was necessary to avoid
circularity if we regard this paper as providing a (detailed) proof of \cite{bk}*{Lemma~7.8(e)}, as mentioned in the introduction.
Now, having proved that Lemma, we can invoke \cite{bk} without circularity.
In particular, we have:

\begin{theorem}
\label{Tpos-shrinker-theorem}
In Theorem~\ref{intro-theorem},
if $\Tpos<\infty$, then 
every  shrinker that occurs at time $\Tpos$ is a smooth embedded surface of  of multiplicity one, and
at least one such shrinker has genus  $\ge 1$.
\end{theorem}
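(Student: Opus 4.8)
The plan is to deduce everything from the appendix together with the recent theorems of Bamler and Kleiner, whose use is now free of circularity because \cite{bk}*{Lemma~7.8(e)} has been established above by independent means. Recall from Corollary~\ref{T*-Tpos-corollary} that $\Tpos=T^*$, and let $\mu(\cdot)$ be the elliptic-regularization Brakke flow of Theorem~\ref{appendix-theorem}, so that $\mu(t)=\Hh^2\llcorner M(t)$ for $t<\Tpos$; since $\mu(\cdot)$ is defined for all time, the measure $\mu(\Tpos)$ is well defined, and by a ``shrinker at time $\Tpos$'' I mean a tangent flow to $\mu(\cdot)$ at a spacetime point of the form $(q,\Tpos)$. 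Because $\Tpos<\infty$ by hypothesis, Assertion~\eqref{app1} of Theorem~\ref{appendix-theorem} supplies a spacetime point $(p,\Tpos)$ at which the shrinker to $\mu(\cdot)$ has entropy $\ge\delta$.

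First I would dispose of the regularity and multiplicity assertions. By Bamler and Kleiner \cite{bk}, the flow $\mu(\cdot)$ is almost regular: the multiplicity-one conjecture holds for it, so every tangent flow has multiplicity one, and the underlying self-shrinker is a smooth, properly embedded surface in $\RR^3$. (At any spacetime point of $\mu(\cdot)$ where the Gauss density happens to be $<2$, this is already contained in \cite{white-boundary-sing}*{Lemma~20}; the force of \cite{bk} is to handle the remaining points.) Applying this at every spacetime point of the form $(q,\Tpos)$ gives the first conclusion of the theorem: every shrinker occurring at time $\Tpos$ is a smooth embedded surface of multiplicity one.

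It then remains to produce one such shrinker of positive genus, and here I would use the point $(p,\Tpos)$ from the first paragraph. Let $S$ be the corresponding shrinker; by the previous paragraph $S$ is smooth and properly embedded, and it has entropy $\ge\delta$. Since $\delta$ was chosen, via \cite{bw}*{Corollary~1.2}, so that the plane, the sphere, and the cylinder are the only smooth shrinkers in $\RR^3$ of entropy $<\delta$, the surface $S$ is none of those three. By Brendle's classification of embedded genus-zero self-shrinkers in $\RR^3$ \cite{brendle} --- the plane, the sphere, and the cylinder exhaust the properly embedded self-shrinkers of genus $0$ --- it follows that $\genus(S)\ge1$, as required.

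The one step with genuine content is the appeal to \cite{bk} in the second paragraph, and within it the passage from multiplicity one to smoothness of the limiting shrinker; this is where all the hard analysis resides. Everything else is formal: the identification $\Tpos=T^*$, the detection of entropy $\ge\delta$ at time $\Tpos$, and the purely topological input of Brendle's theorem.
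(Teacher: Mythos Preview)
Your proof is correct and follows essentially the same route as the paper's own argument: invoke \cite{bk} (now available without circularity) for smoothness and multiplicity one of all shrinkers at time~$\Tpos$, use Theorem~\ref{appendix-theorem}\eqref{app1} together with $\Tpos=T^*$ to locate a shrinker of entropy $\ge\delta$, and then apply \cite{brendle} to force positive genus. The paper's proof is terser but structurally identical.
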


\begin{proof}
By Theorem~\ref{intro-theorem}
and~\cite{bk}, all shrinkers
at time~$\Tpos$ are smoothly embedded and have multiplicity~$1$. By Theorem~\ref{appendix-theorem}(1), at least one such shrinker has entropy
 $\ge \delta$ and thus is not a plane, sphere, or cylinder.
Thus, by~\cite{brendle}, it
 has positive genus.
\end{proof}

\begin{bibdiv}
\begin{biblist}

\bib{brendle}{article}{
   author={Brendle, Simon},
   title={Embedded self-similar shrinkers of genus 0},
   journal={Ann. of Math. (2)},
   volume={183},
   date={2016},
   number={2},
   pages={715--728},
   issn={0003-486X},
   review={\MR{3450486}},
   doi={10.4007/annals.2016.183.2.6},
}

\bib{bk}{article}{
      title={On the Multiplicity One Conjecture for Mean Curvature Flows of surfaces}, 
      author={Bamler, Richard},
      author={Kleiner, Bruce},
      year={2023},
      pages={1--58},
      eprint={arxiv:2312.02106},
      doi={10.48550/arXiv.2312.02106},
}

\bib{bw}{article}{
   author={Bernstein, Jacob},
   author={Wang, Lu},
   title={A topological property of asymptotically conical self-shrinkers of
   small entropy},
   journal={Duke Math. J.},
   volume={166},
   date={2017},
   number={3},
   pages={403--435},
   issn={0012-7094},
   review={\MR{3606722}},
   doi={10.1215/00127094-3715082},
}

\bib{ccms-generic}{article}{
   author={Chodosh, Otis},
   author={Choi, Kyeongsu},
   author={Mantoulidis, Christos},
   author={Schulze, Felix},
   title={Mean curvature flow with generic initial data},
   journal={Invent. Math.},
   volume={237},
   date={2024},
   number={1},
   pages={121--220},
   issn={0020-9910},
   review={\MR{4756990}},
   doi={10.1007/s00222-024-01258-0},
}

\bib{chhw}{article}{
   author={Choi, Kyeongsu},
   author={Haslhofer, Robert},
   author={Hershkovits, Or},
   author={White, Brian},
   title={Ancient asymptotically cylindrical flows and applications},
   journal={Invent. Math.},
   volume={229},
   date={2022},
   number={1},
   pages={139--241},
   issn={0020-9910},
   review={\MR{4438354}},
   doi={10.1007/s00222-022-01103-2},
}

\bib{cm-new}{misc}{
      author={Colding, Tobias~Holck},
      author={Minicozzi, William~P., II},
       title={Quantitative uniqueness for mean curvature flow},
        date={2025},
      eprint={2502.03634},
        note={arXiv:2502.03634 [math.DG]},
         url={https://arxiv.org/abs/2502.03634},
}

\bib{cm-unique}{article}{
   author={Colding, Tobias Holck},
   author={Minicozzi, William P., II},
   title={Uniqueness of blowups and \L ojasiewicz inequalities},
   journal={Ann. of Math. (2)},
   volume={182},
   date={2015},
   number={1},
   pages={221--285},
   issn={0003-486X},
   review={\MR{3374960}},
   doi={10.4007/annals.2015.182.1.5},
}

\bib{hmw-shrinkers}{article}{
  author={Hoffman, David},
  author={Martin, Francisco},
  author={White, Brian},
  title={Generating Shrinkers by Mean Curvature Flow},
  date={2025},
  eprint={https://arxiv.org/abs/2502.20340}
}

\bib{hw-avoid}{article}{
   author={Hershkovits, Or},
   author={White, Brian},
   title={Avoidance for set-theoretic solutions of mean-curvature-type
   flows},
   journal={Comm. Anal. Geom.},
   volume={31},
   date={2023},
   number={1},
   pages={31--67},
   issn={1019-8385},
   review={\MR{4652509}},
   doi={10.4310/cag.2023.v31.n1.a2},
}

\begin{comment}
\bib{ilmanen-manifold}{article}{
   author={Ilmanen, Tom},
   title={Generalized flow of sets by mean curvature on a manifold},
   journal={Indiana Univ. Math. J.},
   volume={41},
   date={1992},
   number={3},
   pages={671--705},
   issn={0022-2518},
   review={\MR{1189906}},
   doi={10.1512/iumj.1992.41.41036},
}
\end{comment}

\begin{comment}
\bib{sw-local}{article}{
   author={Schulze, Felix},
   author={White, Brian},
   title={A local regularity theorem for mean curvature flow with triple
   edges},
   journal={J. Reine Angew. Math.},
   volume={758},
   date={2020},
   pages={281--305},
   issn={0075-4102},
   review={\MR{4048449}},
   doi={10.1515/crelle-2017-0044},
}
\end{comment}

\bib{white-95}{article}{
   author={White, Brian},
   title={The topology of hypersurfaces moving by mean curvature},
   journal={Comm. Anal. Geom.},
   volume={3},
   date={1995},
   number={1-2},
   pages={317--333},
   issn={1019-8385},
   review={\MR{1362655}},
   doi={10.4310/CAG.1995.v3.n2.a5},
}

\bib{white-stratification}{article}{
   author={White, Brian},
   title={Stratification of minimal surfaces, mean curvature flows, and
   harmonic maps},
   journal={J. Reine Angew. Math.},
   volume={488},
   date={1997},
   pages={1--35},
   issn={0075-4102},
   review={\MR{1465365}},
   doi={10.1515/crll.1997.488.1},
}

\begin{comment}
\bib{white-local}{article}{
   author={White, Brian},
   title={A local regularity theorem for mean curvature flow},
   journal={Ann. of Math. (2)},
   volume={161},
   date={2005},
   number={3},
   pages={1487--1519},
   issn={0003-486X},
   review={\MR{2180405}},
   doi={10.4007/annals.2005.161.1487},
}
\end{comment}

\bib{white-cyclic}{article}{
   author={White, Brian},
   title={Currents and flat chains associated to varifolds, with an
   application to mean curvature flow},
   journal={Duke Math. J.},
   volume={148},
   date={2009},
   number={1},
   pages={41--62},
   issn={0012-7094},
   review={\MR{2515099}},
   doi={10.1215/00127094-2009-019},
}

\bib{white-topological}{article}{  
   author={White, Brian},
   title={Topological change in mean convex mean curvature flow},
   journal={Invent. Math.},
   volume={191},
   date={2013},
   number={3},
   pages={501--525},
   issn={0020-9910},
   review={\MR{3020169}},
   doi={10.1007/s00222-012-0397-0},
}

\bib{white-mcf-boundary}{article}{
   author={White, Brian},
   title={Mean curvature flow with boundary},
   journal={Ars Inven. Anal.},
   date={2021},
   pages={Paper No. 4, 43},
   review={\MR{4462472}},
}

\bib{white-boundary-sing}{article}{
   author={White, Brian},
   title={Boundary singularities in mean curvature flow and total curvature
   of minimal surface boundaries},
   journal={Comment. Math. Helv.},
   volume={97},
   date={2022},
   number={4},
   pages={669--689},
   issn={0010-2571},
   review={\MR{4527825}},
   doi={10.4171/cmh/542},
}

\bib{genus-II}{article}{
  author={White, Brian},
  title={The genus-decreasing property of mean curvature flow, II},
  date={2026},
  note={In preparation},
}

\end{biblist}

\end{bibdiv}

\end{document}